\documentclass[11pt,a4paper]{article}

\usepackage{amssymb, amsmath, amsthm}

\usepackage[cm]{fullpage}
\usepackage[english]{babel}
\usepackage[T1]{fontenc}
\usepackage[pdftex]{graphicx}
\usepackage{tikz}
\usepackage{xcolor}
\usepackage{hyperref}
\usepackage{booktabs}
\usepackage{subcaption}

\newtheorem{thm}{Theorem}

\newtheorem{prop}{Proposition}

\newtheorem{ex}{Example}
\newcommand{\matder}[2]{
\ifx&#1&\empty
\left(\dfrac{\partial}{\partial t}#2\dfrac{\partial}{\partial x}\right)
\else
\left(\dfrac{\partial}{\partial t}#2\dfrac{\partial}{\partial x}\right)^#1
\fi
}

\newcommand{\Prob}{\mathrm{{P}}}

\title{Fractional material derivative: pointwise representation and a finite volume numerical scheme}
\author{\L ukasz P\l ociniczak\thanks{Faculty of Pure and Applied Mathematics, Hugo Steinhaus Center, Wroc{\l}aw University of Science and Technology, Wyb. Wyspia{\'n}skiego 27, 50-370 Wroc{\l}aw, Poland}$\;^,$\footnote{Email: lukasz.plociniczak@pwr.edu.pl}\and Marek A. Teuerle$^*$}
\date{}

\begin{document}
\maketitle

\begin{abstract}
The fractional material derivative appears as the fractional operator that governs the dynamics of the scaling limits of L\'evy walks - a stochastic process that originates from the famous continuous-time random walks. It is usually defined as the Fourier-Laplace multiplier, therefore, it can be thought of as a pseudo-differential operator. In this paper, we show that there exists a local representation in time and space, pointwise, of the fractional material derivative. This allows us to define it on a space of locally integrable functions which is larger than the original one in which Fourier and Laplace transform exist as functions.

We consider several typical differential equations involving the fractional material derivative and provide conditions for their solutions to exist. In some cases, the analytical solution can be found. For the general initial value problem, we devise a finite volume method and prove its stability, convergence, and conservation of probability. Numerical illustrations verify our analytical findings. Moreover, our numerical experiments show superiority in the computation time of the proposed numerical scheme over a Monte Carlo method applied to the problem of probability density function's derivation.\\

\noindent\textbf{Keywords}: fractional material derivative, finite volume method, Riemann-Liouville fractional derivative, L\'evy walk, coupled continuous-time random walk, subordinated process\\

\noindent\textbf{AMS Classification}: 26A33, 35R11, 60K50, 60F17
\end{abstract}

\section{Introduction}
In various range of situations spanning from stochastic processes to partial differential equations both in theoretical and applied view, one frequently encounters nonlocal operators. Of the large family of them, fractional derivatives of a number of types are probably the most important examples. The concept of the classical integer-order derivative can be generalized in various directions, suggested by a growing collection of new experimental evidence of its usefulness and purely mathematical interest. Of course, these generalizations have to be constructed in a specific way to preserve the well-definiteness and be natural in some sense. There is an important result stating that, in order to be meaningful, the definition of the fractional derivative must involve a singular kernel \cite{diethelm2020fractional}. In this way, the well-known Riemann-Liouville, Caputo, Riesz, and other variants of the fractional derivative can be devised \cite{teodoro2019review}. 

The fractional material derivative was introduced in \cite{sokolov2003towards} to provide a deterministic approach in the description of L\'evy Walks (LW), which are stochastic processes that provide a model for superdiffusion. In physics, anomalous diffusion is a phenomenon in which the mean squared displacement (MSD) of a particle deviates in time from the classical linear dependence \cite{MetzlerKlafter}. This dynamics can be slower (subdiffusion) or faster (superdiffusion). The importance of anomalous diffusion comes from its occurrence in many different settings in turbulent flow \cite{Hen02}, plasma physics \cite{Del05}, material science \cite{Mul96}, viscoelasticity \cite{Amb96}, biology \cite{Sun17}, hydrology \cite{El04,El20}, and biophysics \cite{Kou08}. The mathematical description of anomalous diffusion is usually devised in a dual way: deterministic and stochastic. One can investigate a corresponding fractional Fokker-Planck partial differential equation for the probability density function (PDF) \cite{metzler1999deriving}, or construct a random motion that describes the moving particle \cite{bouchaud1990anomalous}. For the latter, the Continuous-Time Random Walk (CTRW) framework provides a versatile device for obtaining such processes, and we revise its basics in the next section to provide a detailed link to LWs and fractional material derivative. On the other hand, in the deterministic approach, the anomalous character of the phenomenon manifests itself as a nonlocal operator appearing in the corresponding fractional Fokker-Planck equation. For example, the subdiffusive character of the process can be a result of the phenomenon of long memory or heavy-tailed waiting time, which can be modeled by the Caputo fractional time derivative \cite{plociniczak2015analytical}. In the superdiffusive regime LW provides an interesting model in which spatial and temporal variables are strongly dependent. In the deterministic counterpart, it leads to the fractional material derivative. This operator is defined as a multiplier in the Fourier-Laplace sense, which indicates its manifestly nonlocal character. 

The main purpose of this paper is to provide a pointwise representation of the fractional material derivative, which helps us to deduce several important properties that may be difficult to observe otherwise. In particular, our main result states that the fractional material derivative is itself a Riemann-Liouville operator acting in the direction of the characteristic $x\pm t = C$. This substantially enlarges the natural space on which the fractional material derivative can be defined (we require only local integrability). From there, we are able to state explicit solutions to some model partial differential equations involving the material derivative, and provide a sufficient and necessary condition for solvability of the PDFs. In addition, we use our pointwise representation to construct a conservative finite-volume numerical method that can be used for a variety of PDE problems. We prove the stability and convergence of the method and illustrate our results with several numerical examples. 

Concerning numerical methods for the fractional derivative (or differential equations with this operator), the literature is very rich, and we will mention only some papers relevant to the present topic. A broad overview of numerical methods for fractional operators can be found in \cite{Li19,Li19a}. Specifically, in what follows, we use the L1 method to discretize the Riemann-Liouville fractional derivative. Essentially, it is a piecewise linear approximation method that is very versatile and monotone, leading to convenient analysis and implementation \cite{Li19}. This method has been applied to a variety of differential equation problems and has been thoroughly analyzed (see, for example, \cite{jin2016analysis,Plo21,plociniczak2023linear,Lia18}). Another popular method of discretizing the Riemann-Liouville derivative is based on the observation that it is itself a derivative of a convolution. The so-called convolution quadratures (CQ) inherit many useful properties from the continuous case and can be evaluated cheaply and quickly \cite{schadle2006fast,lubich2004convolution}.

In the next section, we revise some basic ideas and definitions concerning LWs to provide a motivation for considering the fractional material derivative. Section 3 contains our main result on point-wise representation and various corollaries. In Section 4 we construct the finite-volume numerical method and prove its stability and convergence. We conclude the paper with a collection of concrete numerical illustrations that verify our previous findings and indicate some possible future research directions. 

\section{A link between fractional material derivatives and stochastic processes}

The dynamics driven by a fractional material derivative can be derived from the well-known stochastic model of CTRW that was introduced by Weiss and Montroll in 1965 \cite{Montroll}. It generalizes the classical random walk \cite{Pearson, Feller} by allowing both the waiting times for jumps and jumps themselves to be random. Usually, it is assumed that every waiting time and the respective jump are independent (uncoupled). Moreover, all pairs of waiting times and jumps are mutually independent. Due to its importance in other parts, it is necessary to note that the CTRW trajectories are \textit{c\'adl\'ag}, i.e. they are right-continuous and have left limits everywhere in the domain, therefore, the trajectories are discontinuities due to the jumps. Formally, let us define a CTRW process as ${X}(t)=\sum_{i=1}^{N(t)}{J}_i$, where $\{(T_i,{J}_i)\}_{i\geq 1}$ is a sequence of independent and identically distributed (IID) random vectors of waiting times and jumps, both independent of each other, while $N(t)=\max\{n:T_1+T_2+\ldots+T_n\leq t\}$ is the renewal process that counts the number of jumps until time $t$. The sample trajectory of a walker that follows CTRW starts at 0 and then consists of waiting time $T_1$ that is followed by a respective jump $J_1$, then the next waiting time $T_2$ is followed by the next jump $J_2$, and then the scheme repeats itself. 

Now, CTRWs are well-established mathematical models \cite{KlafterSokolov,Komorowski2005,MetzlerKlafter} with a number of physical applications: from their initial use in modeling charge carrier transport in amorphous semiconductors \cite{Scher}, through subsurface tracer dispersion \cite{Scher2}, electron transfer \cite{Nelson}, behavior of dynamical systems \cite{Bologna},  noise in plasma devices \cite{Chechkin2002}, self-diffusion in micelles systems \cite{Ott}, models in gene regulation \cite{Lomholt2005} to dispersion in turbulent systems \cite{Solomon1993}, to name only a few. A proper choice of waiting times and jumps in the CTRW scheme can lead to various anomalous diffusion characteristics. For example, a CTRW model with infinite-variance waiting times and finite-variance jumps is the stochastic argument in the derivation of a fractional Fokker-Planck equation (FFPE) \cite{MetzlerKlafter} that is one of the models for subdiffusion. On the other hand, a superdiffusive CTRW scheme can be obtained by taking infinite-variance jumps and finite-variance waiting times. 

To introduce a family of LW processes \cite{sokolov2003towards,Klafter2, Klafter1, Magdziarz2012, Teuerle2012} one should consider a class of \textit{coupled} CTRWs that assume heavy-tailed jumps and a strong dependence between waiting times and respective jumps. Namely, for the heavy-tailed waiting times $\lim_{t\to \infty}t^\alpha\Prob(T_i>t)=c$  with the index $\alpha\in(0;1)$ and some positive constant c such that $\Prob(T_i>0)=1$, we construct respective jumps as 
 \begin{equation}
 {J}_i={V}_i\, T_i,
 \label{def:jumps}
 \end{equation}
where $\{V_i\}_{i\geq 1}$ is an IID sequence of random directions satisfying $\Prob(V_i=+1)=p=1-\Prob(V_i=-1)$ with $p\in[0;1]$. Random variables $V_i$'s control the direction (left/right) of every jump. What is important is that the jump lengths, due to the relation in \eqref{def:jumps}, are equal to the respective waiting time lengths, and this is true for every pair $(T_i,{J}_i)$.
We define the \textit{wait-first LW}, \textit{jump-first LW} and \textit{standard (continuous) LW}, respectively, as follows:
 \begin{equation}
	{L}_{WF}(t)=\sum_{i=1}^{N(t)}{J}_i,\qquad
	\label{def:wfLW}
\end{equation}
\begin{equation}
	{L}_{JF}(t)=\sum_{i=1}^{N(t)+1}{J}_i,\qquad	
	\label{def:jfLW}
\end{equation}
\begin{equation}
	{L}(t)=\sum_{i=1}^{N(t)}{J}_i+\left( t-\sum_{i=1}^{N(t)}T_i\right){V}_{N(t)+1}.
	\label{def:LW}
\end{equation}

\begin{figure}[!h]
\begin{center}
\includegraphics[scale=0.75]{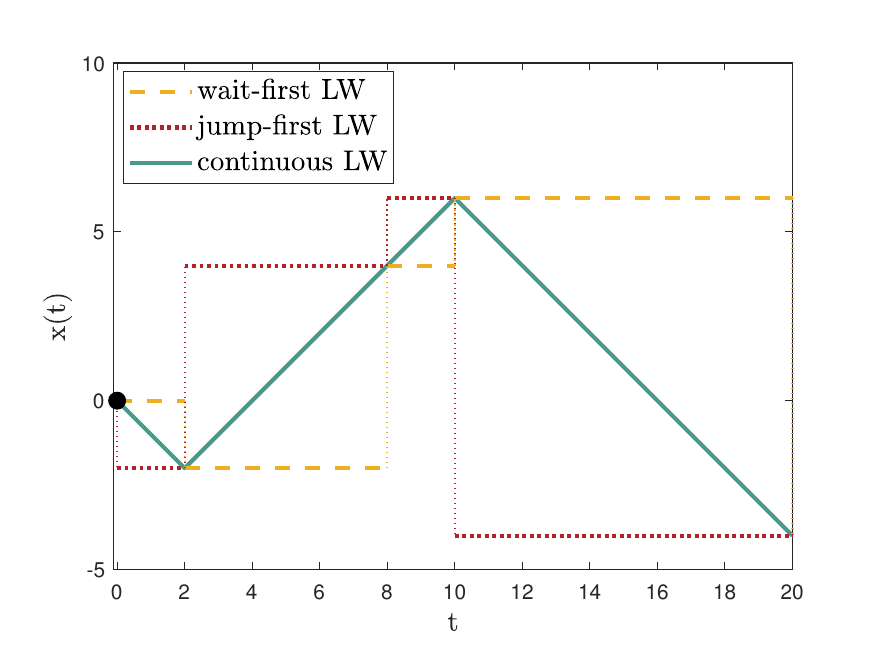}
\caption{The sample one-dimensional trajectories of 3 types of LWs: wait-first, jump-first and continuous LW. Here, we assume that the probability of choosing the new direction being left or right at the renewals is equal to 0.5. Later on, we focus only on cases, where this probability of going left (or right) is equal to 1.}
\label{fig:trajectory}
\end{center}
\end{figure}

The sample trajectories of the LWs considered are presented in Fig. \ref{fig:trajectory}. The trajectories of wait-first LW \eqref{def:wfLW} follow a typical CTRW pattern, while for jump-first LW \eqref{def:jfLW} every waiting time is preceded by a coupled jump. 

The standard LW \eqref{def:LW}, originally proposed by \cite{Klafter1}, has continuous trajectories and is constructed as a linear interpolation between wait-first LW and jump-first LW. Obviously, this leads to renewals at the same epochs of time as renewals of the underlying processes. Furthermore, the distances between renewal times are heavy-tailed with the same index $\alpha$ as the underlying wait-first and jump-first LWs.

To establish the direct link between LWs and the fractional material derivative, we need to analyze the functional convergence in the Skorokhod space \cite{Whitt} of LW schemes \eqref{def:wfLW}-\eqref{def:LW}. The following results hold \cite{MagdziarzZorawik2017, Magdziarz2015LWFCLT,Jurl_FCLT,MagdziarzZorawik,Teuerle2015}:
\begin{equation}
\frac{{L}_{WF}(nt)}{n}\xrightarrow{n\rightarrow\infty} \left(L^-_{\alpha}(S_\alpha^{-1}(t))\right)^+,
\label{def:fcltWFLW}
\end{equation}
\begin{equation}
\frac{{L}_{JF}(nt)}{n}\xrightarrow{n\rightarrow\infty} L_{\alpha}(S_\alpha^{-1}(t)),
\label{def:fcltJFLW}
\end{equation}
\begin{equation}
\frac{{L}(nt)}{n}\xrightarrow{n\rightarrow\infty} \left\{
\begin{tabular}{lcr}
$\left(L^-_{\alpha}(S_\alpha^{-1}(t))\right)^+$ & if & $t\in \mathcal{R}_\alpha$,\\
$\left(L^-_{\alpha}(S_\alpha^{-1}(t))\right)^+ +\frac{t-G(t)}{H(t)-G(t)}\left(L_{\alpha}(S_\alpha^{-1}(t))-\left(L^-_{\alpha}(S_\alpha^{-1}(t))\right)^+\right)$ & if & $t\notin \mathcal{R}_\alpha$,\\
\end{tabular}
\right.
\label{def:fcltLW},
\end{equation}
where $\mathcal{R}_\alpha(S_\alpha(t))=\{S_\alpha(t):t\geq 0\}\in[0,\infty)$.
Here, $L_\alpha(s)$ and $S_\alpha(t)$ are c\'adl\'ag scaling limit of the partial sums of jumps and waiting times, respectively. $L^-_\alpha(s)$ and $S^-_\alpha(t)$ denote left-continuous versions of $L_\alpha(s)$ and $S_\alpha(t)$, i.e. $L^-_\alpha(s)=\lim_{s\to t^-} L_\alpha(s)$. The process $S_\alpha^{-1}(t)$ is the inverse subordinator to $S_\alpha(t)$ and is defined as $S_\alpha^{-1}(t)=\inf\{\tau:S_\alpha(\tau)>t\}$. Finally, $\left(L^-_{\alpha}(S_\alpha^{-1}(t))\right)^+$ denotes the right-continuous version of $L^-_{\alpha}(S_\alpha^{-1}(t))$ and $G(t)=S^-_\alpha(S_\alpha^{-1}(t+))$ is the moment of the previous jump of $\left(L^-_{\alpha}(S_\alpha^{-1}(t))\right)^+$ and $H(t)=S_\alpha(S_\alpha^{-1}(t+))$ is the moment of the next jump of $L_{\alpha}(S_\alpha^{-1}(t))$ and $S_\alpha(t+)=\lim_{s\to t^+}S_\alpha(s)$. 

Finally, it appears that all three PDFs $u(x,t)$ of the scaling limits of LWs \eqref{def:fcltWFLW}-\eqref{def:fcltLW} fulfill the following pseudo-differential equation:

\begin{equation}
\label{eqn:ProbabilisticProblem}
\left( p\matder{\alpha}{+}+(1-p)\matder{\alpha}{-}\right) u(x,t) = f(x,t), \quad u(x,0) = \delta(x), \quad x \geq 0,
\end{equation}
where the source function $f(x,t)$ depends on the type of LW \cite{Magdziarz2015LWFCLT,Teuerle2015} and
\begin{equation}
\mathcal{F}\mathcal{L}\left\{ \matder{\alpha}{\pm} u(x,t) \right\}=(s\mp i \xi)^\alpha \hat{U}(\xi,s)
\label{def:fracMatDer}
\end{equation}
denotes the Fourier-Laplace transform of the \textit{fractional material derivative} for a PDF $u(x,t)$.

In the literature \cite{MagdziarzZorawik2017, Magdziarz2015LWFCLT}, one can find an exact solution to the PDF derivation problem for the scaling limits of LWs, although in this work we want to establish a methodology for a more general problem and propose a stable and convergent scheme for numerical approximation that can also be accurate in the probabilistic cases mentioned above.

\section{Integro-differential form of the fractional material derivative}
The definition of the fractional material derivative using the Fourier-Laplace transform \eqref{def:fracMatDer} is very natural and straightforward. However, for numerical computation, it appears that it is more convenient to work in the original $x-t$ space rather than with transformed variables. The following result is interesting on its own. 

\begin{thm}[Pointwise representation]
Let $u$ be of $x$-Schwarz class and $t$-exponential on $\mathbb{R}\times\mathbb{R}_+$ so that the Fourier-Laplace transform is well-defined. Then, for $0<\alpha<1$ we have
\begin{equation}
    \label{eqn:MatDerInt}
    \matder{\alpha}{\pm} u(x,t) = \frac{1}{\Gamma(1-\alpha)}\matder{}{\pm} \int_0^t (t-s)^{-\alpha} u(x \mp (t-s),s) ds
\end{equation}
\end{thm}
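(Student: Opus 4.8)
The plan is to verify \eqref{eqn:MatDerInt} by transforming its right-hand side and checking that the Fourier--Laplace image agrees with the multiplier $(s\mp i\xi)^\alpha\hat U(\xi,s)$ appearing in the definition \eqref{def:fracMatDer}; since the transform is injective on the stated class of $x$-Schwarz, $t$-exponential functions, equality of the images yields the pointwise identity. I will carry out the $+$ case, the $-$ case being identical after the replacement $\xi\mapsto-\xi$. I use the Fourier convention $\mathcal F\{f\}(\xi)=\int_{\mathbb R}e^{i\xi x}f(x)\,dx$, under which $\partial_x$ has the symbol $-i\xi$ and the characteristic operator $\partial_t+\partial_x$ has the symbol $s-i\xi$; this is exactly the convention that makes \eqref{def:fracMatDer} reduce to the ordinary material derivative at $\alpha=1$.

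First I would pass to characteristic coordinates inside the integral. The substitution $\tau=t-s$ gives
\begin{equation}
I(x,t):=\int_0^t (t-s)^{-\alpha}u\bigl(x-(t-s),s\bigr)\,ds=\int_0^t \tau^{-\alpha}\,u(x-\tau,t-\tau)\,d\tau ,
\end{equation}
which displays $I$ as a convolution in $t$ running along the line $x-t=\mathrm{const}$. Applying the $x$-Fourier transform and the shift rule $\mathcal F\{u(\,\cdot-\tau,\,t-\tau)\}(\xi)=e^{i\xi\tau}\hat u_x(\xi,t-\tau)$ turns this into $\mathcal F\{I\}(\xi,t)=\int_0^t \tau^{-\alpha}e^{i\xi\tau}\,\hat u_x(\xi,t-\tau)\,d\tau$, a genuine $t$-convolution of $\tau^{-\alpha}e^{i\xi\tau}$ with $\hat u_x(\xi,\cdot)$.

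Next I would take the Laplace transform and invoke the convolution theorem. The kernel transforms by the standard Gamma integral, $\mathcal L\{\tau^{-\alpha}e^{i\xi\tau}\}(s)=\int_0^\infty \tau^{-\alpha}e^{-(s-i\xi)\tau}\,d\tau=\Gamma(1-\alpha)(s-i\xi)^{\alpha-1}$, which converges because $\alpha<1$ and $\operatorname{Re}(s-i\xi)=\operatorname{Re}s>0$ (the principal branch being used throughout the right half-plane), so that $\mathcal F\mathcal L\{I\}(\xi,s)=\Gamma(1-\alpha)(s-i\xi)^{\alpha-1}\hat U(\xi,s)$. It then remains to apply $\partial_t+\partial_x$: since $I(x,0)=0$ the Laplace boundary term drops out, and since $I$ inherits Schwarz decay in $x$ there is no contribution from $\partial_x$, so $\mathcal F\mathcal L\{(\partial_t+\partial_x)I\}=(s-i\xi)\,\mathcal F\mathcal L\{I\}$. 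Dividing by $\Gamma(1-\alpha)$ leaves $(s-i\xi)^\alpha\hat U(\xi,s)$, which is precisely the symbol in \eqref{def:fracMatDer}, and injectivity of the transform finishes the proof.

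The routine ingredients are the shift rule and the Gamma integral. The places that actually use the hypotheses are the appeal to Fubini when interchanging the $x$- and $t$-integrations, the convolution theorem in $t$, and --- the point I expect to be the main obstacle --- the verification that the weakly singular convolution $I$ is genuinely differentiable with $I(x,0)=0$, so that the first-order operator $\partial_t+\partial_x$ contributes only the factor $s-i\xi$ and no extra boundary or distributional term. In other words, one must confirm that the right-hand side of \eqref{eqn:MatDerInt} is the classical Riemann--Liouville derivative of order $\alpha$ taken in the characteristic direction $x\mp t=\mathrm{const}$, which is exactly why the $x$-Schwarz and $t$-exponential regularity is assumed.
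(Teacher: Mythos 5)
Your proof is correct, but it runs in the opposite direction to the paper's. The paper \emph{derives} the formula: starting from the multiplier $(s-i\xi)^\alpha \widehat U(\xi,s)$, it inverts the Fourier transform after inserting a convergence factor $e^{-\epsilon|\eta|}$, identifies the limiting kernel through the tempered-distribution identity $\mathcal F\bigl\{x^{-\alpha}H(x)/\Gamma(1-\alpha)\bigr\}(\eta)=(-i\eta)^{\alpha-1}$, then inverts the Laplace transform via a shifted-derivative rule that produces a Dirac term, and finally reassembles the pieces with the Leibniz rule into the characteristic derivative of the fractional integral. You instead \emph{verify} the formula: the substitution $\tau=t-s$ exhibits the right-hand side as a $t$-convolution along the characteristic, the shift rule and the Gamma integral $\mathcal L\{\tau^{-\alpha}e^{i\xi\tau}\}(s)=\Gamma(1-\alpha)(s-i\xi)^{\alpha-1}$ give its Fourier--Laplace image $\Gamma(1-\alpha)(s-i\xi)^{\alpha-1}\widehat U(\xi,s)$, the first-order operator contributes the factor $s-i\xi$ (no boundary terms, since $I(x,0)=0$ and $I$ decays in $x$), and injectivity of the transform on the stated class closes the argument; your sign conventions match the paper's, so the resulting symbol $(s-i\xi)^\alpha\widehat U$ is exactly the definition of the ``$+$'' derivative. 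What your route buys: every transform you take is an absolutely convergent classical integral, so you avoid distribution theory altogether --- no convergence factor, no interchange of the $\epsilon$-limit with the $y$-integration, no delta functions --- which are precisely the steps the paper leaves informal (its distributional identity even carries an empty citation bracket). What it costs: it is a verification rather than a constructive derivation (you must already know the answer, which is legitimate for a stated theorem), and the technical burden shifts to the point you yourself flag --- that $(\partial_t+\partial_x)I$ exists classically and the symbol rules apply; this does hold here, since along $x-t=C$ the function $I$ reduces to the Riemann--Liouville integral of $t\mapsto u(C+t,t)$, whose differentiability follows from the assumed regularity of $u$. On balance the two arguments sit at a comparable level of rigor, and yours is arguably the cleaner one.
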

\begin{proof}
We will consider only the "$+$" version of the fractional material derivative - the other case is analogous. Let $\widehat{U}(\xi,s)$ be the Fourier-Laplace transform of $u(x,t)$, that is
\begin{equation}
    \widehat{U}(\xi,s) = \int_0^\infty \left(\int_\infty^\infty u(x, t) e^{i \xi x} dx\right) e^{-s t} dt.
\end{equation}
Since $u$ is $x$-Schwartz and $t$-exponential, the above is well-defined and we can use inversion formulas. For each $s$ we have $\widehat{U}(\cdot,s)\in\mathcal{S}$ and $(s-i\xi)^\alpha$ is locally $\xi$ integrable, therefore,
\begin{equation}
    \matder{\alpha}{+} u(x,t) = \mathcal{L}^{-1} \left\{\frac{1}{2\pi}\int_{-\infty}^\infty (s-i\xi)^\alpha \widehat{U}(\xi,s) e^{-i\xi x} d\xi\right\}(t) =:  \mathcal{L}^{-1} \left\{J(s)\right\}(t).
\end{equation}
Now, we will compute the integral $J(s)$. First, change the variable $\eta = \xi + is$ to obtain
\begin{equation}
    J(s) = \frac{1}{2\pi}\int_{-\infty}^\infty (-i\eta)^\alpha \widehat{U}(\eta-is,s) e^{-i (\eta-is) x} d\eta = 
    \lim\limits_{\epsilon\rightarrow 0} \frac{1}{2\pi}\int_{-\infty}^\infty (-i\eta)^\alpha \widehat{U}(\eta-is,s) e^{-i (\eta-is) x} e^{-\epsilon |\eta|} d\eta,
\end{equation}
where we have introduced the convergence factor $e^{-\epsilon |\eta|}$. Now, we can unravel the definition of the Fourier transform and change the order of integration
\begin{equation}
    \begin{split}
        J(s) &= \lim\limits_{\epsilon\rightarrow 0} \frac{1}{2\pi}\int_{-\infty}^\infty \left(\int_{-\infty}^\infty (-i\eta)^{\alpha} e^{-\epsilon |\eta| - i(x-y)\eta} d\eta\right) e^{-(x-y)s} U(y,s)dy \\
        &= \lim\limits_{\epsilon\rightarrow 0} \frac{1}{2\pi}\int_{-\infty}^\infty \left(\frac{\partial}{\partial x}\int_{-\infty}^\infty (-i\eta)^{\alpha-1} e^{-\epsilon |\eta| - i(x-y)\eta} d\eta\right) e^{-(x-y)s} U(y,s)dy\\
        &=: \int_{-\infty}^\infty \lim\limits_{\epsilon\rightarrow 0} \left(\frac{\partial}{\partial x} K_\epsilon(x-y) \right) e^{-(x-y)s} U(y,s)dy, 
    \end{split}
\end{equation}
where we are left with evaluating the limit above. To this end, note that $K_\epsilon$ is the inverse Fourier transform of the $L^1(\mathbb{R})$ function.
\begin{equation}
    \widehat{K}_\epsilon(\eta) = (-i\eta)^{\alpha-1} e^{-\epsilon |\eta|}.
\end{equation}
It is known from harmonic analysis that, in the sense of tempered distributions, we have []
\begin{equation}
    \widehat{\left(\frac{1}{\Gamma(1-\alpha)} x^{-\alpha} H(x)\right)}(\eta) = (-i\eta)^{\alpha-1} = \lim\limits_{\epsilon\rightarrow 0} (-i\eta)^{\alpha-1} e^{-\epsilon |\eta|} = \lim\limits_{\epsilon\rightarrow 0} \widehat{K}_\epsilon(\eta),
\end{equation} 
whence
\begin{equation}
    \lim\limits_{\epsilon\rightarrow 0} K_\epsilon(x-y) = \frac{(x-y)^{-\alpha}}{\Gamma(1-\alpha)} H(x-y).
\end{equation}
This leads to
\begin{equation}
    \begin{split}
        J(s) 
        &= \frac{1}{\Gamma(1-\alpha)}\int_{-\infty}^\infty \frac{\partial}{\partial x} \left((x-y)^{-\alpha} H(x-y)\right) e^{-(x-y)s} U(y,s)dy \\
        &=\frac{1}{\Gamma(1-\alpha)} \left(\frac{\partial}{\partial x}\int_{-\infty}^x (x-y)^{-\alpha} e^{-(x-y)s} U(y,s)dy + \int_{-\infty}^x (x-y)^{-\alpha} e^{-(x-y)s}s U(y,s)\right),
    \end{split}
\end{equation}
where we have used the elementary formula for the derivative of a product. Now, we are ready to invert the Laplace transform. To this end, we note that from
\begin{equation}
    \mathcal{L} \left\{f'(t)\right\}(s) = s \mathcal{L} \left\{f(t)\right\}(s) - f(0) = s \mathcal{L} \left\{f(t)\right\}(s) - f(0) \mathcal{L} \left\{\delta(t)\right\}(s),
\end{equation}
it follows that
\begin{equation}
    \mathcal{L}^{-1} \left\{e^{-cs}s \mathcal{L} \left\{f(t)\right\}(s)\right\}(t) = \left(f'(t-c) + f(0) \delta(t-c)\right) H(t-c).
\end{equation}
Therefore,
\begin{equation}
    \begin{split}
        \matder{\alpha}{+} u(x,t) &= \frac{1}{\Gamma(1-\alpha)} \left(\frac{\partial}{\partial x}\int_{x-t}^x (x-y)^{-\alpha} u(y,t-(x-y))dy \right. \\
        &\left.+ \int_{x-t}^x (x-y)^{-\alpha} \left(u_t(y, t-(x-y)) + u(y,0) \delta(t-(x-y))\right)dy\right).
    \end{split}
\end{equation}
Now, evaluating the integral with the Dirac delta yields the term $t^{-\alpha} u(x-t,0)$ while using the Leibniz rule for the derivative of the integral returns
\begin{equation}
    \int_{x-t}^x (x-y)^{-\alpha} u_t(y, t-(x-y)) dy = \frac{\partial}{\partial t} \int_{x-t}^x (x-y)^{-\alpha} u(y, t-(x-y)) dy - t^{-\alpha} u(x-t,0),
\end{equation}
which produces the same with opposite sign. Hence,
\begin{equation}
    \matder{\alpha}{+} u(x,t) = \frac{1}{\Gamma(1-\alpha)} \matder{}{+}\int_{x-t}^x (x-y)^{-\alpha} u(y,t-(x-y))dy.
\end{equation}
The substitution $s = y + t-x$ yields the conclusion. 
\end{proof}
Notice that the formula (\ref{eqn:MatDerInt}) is well-posed assuming merely that $u$ is locally integrable without any requirements on its decay at infinity. Henceforth, we can \emph{redefine} the fractional material derivative by (\ref{eqn:MatDerInt}) on such a space enlarging its domain. Further, as can be inferred from the integral in (\ref{eqn:MatDerInt}), the calculation of the material derivative at $(x,t)$ requires knowledge of all past values of $u$ along the "characteristic" $x \mp t = C_\pm$ for some $C\in\mathbb{R}$ (see Fig. \ref{fig:Domain}). As we shall see in the following, this implies that the fractional material derivative is essentially the Riemnann-Liouville derivative computed in the characteristic direction.  

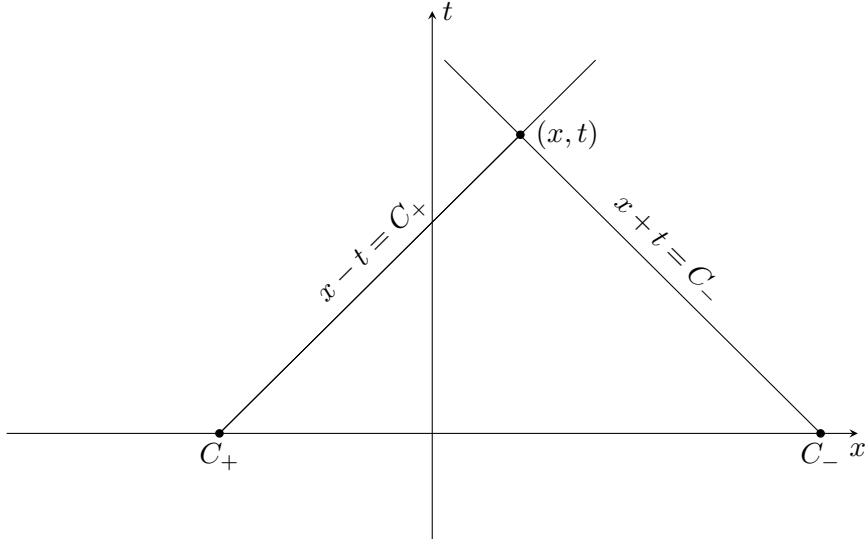
\begin{figure}[!h]
\centering
\begin{tikzpicture}[scale = 1.4]
    \draw[->, >= stealth] (-4,0) -- (4,0) node[right, below] {$x$};
    \draw[->, >= stealth] (0, -1) -- (0,4) node[right] {$t$}; 
    
    \draw[fill] (-2, 0) -- +(45:5) node[midway, sloped, above, xshift = -10pt] {$x-t=C_+$};
    \draw[fill] (3.65, 0) -- +(135:5) node[midway, sloped, above, xshift = 10pt] {$x+t=C_-$};
    \draw[fill] (-2, 0) -- +(45:4) circle (1pt) node[right, xshift = +2pt] {$(x,t)$};
    \draw[fill] (-2, 0) circle (1pt) node[below] {$C_+$};
    \draw[fill] (3.65, 0) circle (1pt) node[below] {$C_-$};
\end{tikzpicture}
\caption{Lines of dependence of the fractional material derivative given by (\ref{eqn:MatDerInt}). }
\label{fig:Domain}
\end{figure}

We can immediately make some simple observations.
\begin{prop}
\label{prop:Properties}
Let $u, f \in L^1_{loc} (\mathbb{R}\times \mathbb{R}_+)$. Then, for $0<\alpha<1$ we have
\begin{enumerate}
    \item \emph{Continuity with respect to $\alpha$.}
    \begin{equation}
        \lim\limits_{\alpha \rightarrow 0^+} \matder{\alpha}{\pm} u(x,t) = u(x,t), \quad \lim\limits_{\alpha \rightarrow 1^-} \matder{\alpha}{\pm} u(x,t) = \matder{}{\pm}u(x,t).
    \end{equation} 
    \item \emph{Action on a travelling wave.} For $u(x,t) = U(x \mp t)$ we have
    \begin{equation}
        \matder{\alpha}{\pm} u(x,t) = \frac{t^{-\alpha}}{\Gamma(1-\alpha)} U'(x \mp t). 
    \end{equation}
    \item \emph{Solution of the PDE.} The unique solution of the problem
    \begin{equation}
        \label{eqn:MatDerPDE}
        \begin{cases}
            \matder{\alpha}{\pm} u(x,t) = f(x,t), & x\in\mathbb{R}, \quad t>0, \\
            \lim\limits_{t\rightarrow 0^+} I^{1-\alpha}_t u(x,t) = \varphi(x), &
        \end{cases}
    \end{equation}
    is given by
    \begin{equation}
        \label{eqn:MatDerPDESol}
        u(x,t) = \frac{t^{\alpha-1}}{\Gamma(\alpha)} \varphi(x \mp t) + \frac{1}{\Gamma(\alpha)}\int_0^t (t-s)^{\alpha-1} f(x \mp (t - s), s) ds. 
    \end{equation}
    \item \emph{Conservation Law.} For any $a$, $b \in\mathbb{R}$ we have 
    \begin{equation}
        \begin{split}
            \frac{1}{\Gamma(1-\alpha)} &\frac{d}{dt} \int_0^t (t-s)^{-\alpha} \left(\int_{a-(t-s)}^{b-(t-s)} u(y,s) dy\right) ds \\
            &+ \frac{1}{\Gamma(1-\alpha)} \int_0^t (t-s)^{-\alpha} \left(u(b-(t-s),s) - u(a-(t-s),s) \right)ds = \int_a^b f(x,t). 
        \end{split}
    \end{equation}
\end{enumerate}
\end{prop}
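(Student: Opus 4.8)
The plan is to reduce every assertion to the pointwise representation \eqref{eqn:MatDerInt} of Theorem 1, and for Parts 1--3 to recognise the fractional material derivative as an ordinary one-dimensional Riemann--Liouville derivative taken \emph{along the characteristic}. Concretely, in the ``$+$'' case fix the characteristic label $\xi=x-t$ and set $w_\xi(\tau)=u(\xi+\tau,\tau)$, the restriction of $u$ to the line $x-t=\xi$. The change of variables $(x,t)\mapsto(\xi,\tau)=(x-t,t)$ sends $\partial_t+\partial_x$ to $\partial_\tau$ at fixed $\xi$, and the integral in \eqref{eqn:MatDerInt} is precisely $\Gamma(1-\alpha)\,I^{1-\alpha}_\tau w_\xi(\tau)$, so Theorem 1 yields
\[
\matder{\alpha}{+}u(x,t)=\partial_\tau I^{1-\alpha}_\tau w_\xi(\tau)=D^\alpha_\tau w_\xi(\tau),
\]
the Riemann--Liouville derivative of $w_\xi$ in $\tau$ (the ``$-$'' case is identical with $\xi=x+t$). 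This single identity makes Parts 1--3 consequences of standard scalar facts about $D^\alpha$.

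For Part 1 I would invoke the classical order limits $\lim_{\alpha\to0^+}D^\alpha g=g$ and $\lim_{\alpha\to1^-}D^\alpha g=g'$ applied to $g=w_\xi$; since $w_\xi(\tau)=u(x,t)$ and $\partial_\tau w_\xi=\matder{}{+}u$, these translate verbatim into the two stated limits. For Part 2, if $u(x,t)=U(x\mp t)$ then $w_\xi(\tau)=U(\xi)$ is \emph{constant} in $\tau$, whence $D^\alpha_\tau w_\xi=U(\xi)\,\tau^{-\alpha}/\Gamma(1-\alpha)$; translating back gives the travelling-wave value $t^{-\alpha}U(x\mp t)/\Gamma(1-\alpha)$, which is consistent with the $\alpha\to0^+$ limit of Part 1 and vanishes as $\alpha\to1^-$ in accordance with $\matder{}{\pm}U(x\mp t)=0$.

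For Part 3 the same reduction turns \eqref{eqn:MatDerPDE} into the scalar problem $D^\alpha_\tau w_\xi(\tau)=g_\xi(\tau)$ with $g_\xi(\tau)=f(\xi+\tau,\tau)$, whose unique solution is $w_\xi(\tau)=c(\xi)\tau^{\alpha-1}/\Gamma(\alpha)+\Gamma(\alpha)^{-1}\int_0^\tau(\tau-s)^{\alpha-1}g_\xi(s)\,ds$ with $c(\xi)=\lim_{\tau\to0^+}I^{1-\alpha}_\tau w_\xi(\tau)$. Undoing the substitution ($\tau=t$, $g_\xi(s)=f(x\mp(t-s),s)$, $c(\xi)=\varphi(\xi)=\varphi(x\mp t)$) reproduces \eqref{eqn:MatDerPDESol}, and uniqueness is inherited from the scalar Riemann--Liouville problem. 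The one point requiring care is the compatibility of the two initial data: the prescribed $\lim_{t\to0^+}I^{1-\alpha}_t u(x,t)$ integrates $u(x,\cdot)$ at fixed $x$, while $c(\xi)$ integrates $u$ along the characteristic; since the two integrands differ only by a spatial shift of size $\le t\to0$, their limits agree for continuous $u$, giving $c(\xi)=\varphi(\xi)$.

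For Part 4 I would instead integrate the representation \eqref{eqn:MatDerInt} of the equation $\matder{}{+}^{\alpha}u=f$ directly over $x\in[a,b]$. Writing $\matder{}{+}=\partial_t+\partial_x$, the $\partial_x$-part of $\int_a^b(\partial_t+\partial_x)[\,\cdot\,]\,dx$ telescopes by the fundamental theorem of calculus into the boundary term $\Gamma(1-\alpha)^{-1}\int_0^t(t-s)^{-\alpha}\bigl(u(b-(t-s),s)-u(a-(t-s),s)\bigr)ds$, the second term on the left. In the $\partial_t$-part I pull $\partial_t$ outside $\int_a^b dx$, exchange the $x$- and $s$-integrations, and substitute $y=x-(t-s)$ to produce the first term on the left; the right-hand side is $\int_a^b f(x,t)\,dx$, so the identity follows. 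I expect the technical heart of the whole proposition to lie not in this algebra but in justifying the limit and interchange steps for merely $L^1_{loc}$ data: the order limits of $D^\alpha$ in Part 1 (where $\alpha\to1^-$ requires differentiability of $w_\xi$), the fixed-$x$ versus along-characteristic initial-data matching in Part 3, and the differentiation under the integral sign together with Fubini against the weakly singular kernel $(t-s)^{-\alpha}$ in Part 4.
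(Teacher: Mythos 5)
Your proposal is correct and, for Parts 1--3, follows essentially the same route as the paper: the paper's proof of Part 3 is literally your construction (its $U(t)=\widetilde{U}(t+C,t)$ on the characteristic $x-t=C$ is your $w_\xi(\tau)$), and Part 1 is likewise dispatched by reducing to the classical order-limits of the scalar Riemann--Liouville derivative. Two points of comparison are worth recording. First, in Part 2 your computation yields
\begin{equation*}
\matder{\alpha}{\pm}u(x,t)=\frac{t^{-\alpha}}{\Gamma(1-\alpha)}\,U(x\mp t),
\end{equation*}
without the derivative on $U$; this is also what the paper's own argument produces (the integrand $u(x\mp(t-s),s)$ collapses to the constant $U(x\mp t)$, and applying $\matder{}{\pm}$ to $U(x\mp t)\,t^{1-\alpha}/(1-\alpha)$ leaves exactly $U(x\mp t)\,t^{-\alpha}$), so the $U'$ in the displayed statement is evidently a typo, and your consistency check against the $\alpha\to 0^+$ limit of Part 1 correctly settles which version is right --- you should flag such a discrepancy explicitly rather than silently proving the corrected formula. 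Second, the paper's proof simply omits Part 4; your argument (integrate the pointwise representation over $[a,b]$, fundamental theorem of calculus for the $\partial_x$ part, exchange of integrals plus the shift $y=x-(t-s)$ for the $\partial_t$ part) is the natural one, and it is precisely the computation the paper performs later, without comment, when deriving the cell-average conservation form that underlies the finite-volume scheme. Your closing caveats --- justifying the interchanges for merely $L^1_{loc}$ data, and the matching of the fixed-$x$ initial functional $\lim_{t\to 0^+}I^{1-\alpha}_t u(x,t)$ with the along-characteristic limit in Part 3 --- point at genuine gaps in rigor that the paper glosses over as well.
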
  
\begin{proof}
The first statement follows easily from the properties of the Riemann-Liouville fractional derivative (see, for example, \cite{Li19}). For the second, it is sufficient to observe that $u(x \mp (t-s),s) = U(x \mp (t-s) \mp s) = U(x \mp t)$. This statement follows after calculating the value of the remaining integral. 

For the third part of the proposition, we will use the classical method of characteristics. As above, we will focus only on the "$+$" case, the other is solved analogously. Fix a constant $C\in\mathbb{R}$ and consider the characteristic $x - t = C$. Define the value of the fractional integral on the characteristic
\begin{equation}
    \widetilde{U}(x,t) := \frac{1}{\Gamma(1-\alpha)} \int_0^t (t-s)^{-\alpha} u(x -
    (t-s),s) ds, \quad U(t) := \widetilde{U}(t+C, t).
\end{equation}
Then, we have
\begin{equation}
    \frac{d}{dt}U(t) = \matder{}{+}\widetilde{U}(x,t), \quad x = t + C.
\end{equation}
Therefore, according to our representation (\ref{eqn:MatDerInt}), the PDE (\ref{eqn:MatDerPDE}) transforms into an ordinary fractional equation $D^\alpha U(t) = f(t+C,t)$. The existence and uniqueness of its solution quickly follows from general theory. We can also apply the fractional integral operator on both sides to obtain
\begin{equation}
    U(t) = \frac{t^{\alpha-1}}{\Gamma(1-\alpha)} \left[I^{1-\alpha} U(t)\right]_{t=0} + \frac{1}{\Gamma(\alpha)} \int_0^t (t-s)^{\alpha-1} f(s+C,s)ds. 
\end{equation}
Using the initial condition (\ref{eqn:MatDerPDE}) and recalling that $x - t = C$ we conclude the proof. 
\end{proof}
Having the exact form of the solution to (\ref{eqn:MatDerPDESol}) allows us to easily compute some PDFs of various CTRWs. As appears, the requirement of the solution to be the probability density forces some stringent conditions on the source function $f$. 

\begin{thm}[One-sided probability density problem]
\label{thm:Density}
Let $f(\cdot, t)\in L^1(\mathbb{R})$ for each $t>0$ and $f(x,\cdot) \in L^1_{loc}(\mathbb{R}_+)$ for each $x\in\mathbb{R}$. Furthermore, assume that $\varphi$ defined in (\ref{eqn:MatDerPDESol}) is in $L^1(\mathbb{R})$. Then, the problem
\begin{equation}
    \label{eqn:DensityProb}
    \begin{cases}
        \matder{\alpha}{\pm} u(x,t) = f(x,t), & x\in\mathbb{R}, \quad 0<t\leq T, \quad 0<\alpha<1, \\
        \displaystyle{\int_{-\infty}^\infty} u(x,t) dx = 1, \quad u(x,t) \geq 0, & \\
        \lim\limits_{x\rightarrow \pm \infty} u(x,t) = 0, &\\
        \lim\limits_{t\rightarrow 0^+} u(x,t) = \delta(x), & \\
    \end{cases}
\end{equation}
has a unique solution given by
\begin{equation}
    \label{eqn:DensitySol}
    u(x,t) = \frac{1}{\Gamma(\alpha)} \int_0^t (t-s)^{\alpha-1} f(x \mp (t-s),s) ds,
\end{equation}
if and only if $f(x,t)\geq 0$ and 
\begin{equation}
    \label{eqn:fCond}
    \int_{-\infty}^\infty f(x,t) dx = \frac{t^{-\alpha}}{\Gamma(1-\alpha)}.	
\end{equation}
The limit in the initial condition are interpreted in the distributional sense. 
\end{thm}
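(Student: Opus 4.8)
The plan is to treat the equivalence by leveraging the explicit solution formula \eqref{eqn:MatDerPDESol} from Proposition~\ref{prop:Properties} and to read off each of the four requirements in \eqref{eqn:DensityProb} as a condition on $f$. Writing the general solution of $\matder{\alpha}{\pm}u = f$ as $u = u_h + u_p$, where $u_h(x,t) = \frac{t^{\alpha-1}}{\Gamma(\alpha)}\varphi(x\mp t)$ is the homogeneous part and $u_p$ is the convolution integral appearing in \eqref{eqn:DensitySol}, I would first argue that the density constraints collapse $u$ onto $u_p$. Indeed, since $I^\alpha F$ with $F(s)=\int_{\mathbb R} f(y,s)\,\dd y$ stays bounded as $t\to 0^+$ while $u_h$ blows up like $t^{\alpha-1}$ carrying the fixed sign pattern of $\varphi$, the combination of $u\ge 0$ and $\int_{\mathbb R} u(x,t)\,\dd x = 1$ for all $t$ forces $\int\varphi=0$ together with $\varphi\ge 0$, hence $\varphi\equiv 0$. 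This simultaneously yields uniqueness and reduces the whole problem to analysing the single candidate \eqref{eqn:DensitySol}.

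For the sufficiency direction ($f\ge 0$ and \eqref{eqn:fCond} imply that \eqref{eqn:DensitySol} solves the problem), nonnegativity of $u$ is immediate because the kernel $(t-s)^{\alpha-1}/\Gamma(\alpha)$ is positive. For the unit mass I would apply Fubini (justified by $f(\cdot,t)\in L^1(\mathbb R)$), use translation invariance to replace $\int f(x\mp(t-s),s)\,\dd x$ by $F(s)=s^{-\alpha}/\Gamma(1-\alpha)$, and then evaluate $\frac{1}{\Gamma(\alpha)\Gamma(1-\alpha)}\int_0^t (t-s)^{\alpha-1} s^{-\alpha}\,\dd s$ via the substitution $s=tr$, recognising the Beta integral $B(\alpha,1-\alpha)=\Gamma(\alpha)\Gamma(1-\alpha)$, so that the total mass is exactly $1$. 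Decay as $x\to\pm\infty$ should follow from the same $L^1$ hypothesis and dominated convergence.

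The initial condition is the delicate point. I would verify $u(\cdot,t)\to\delta$ weakly by testing against a bounded continuous $\phi$: after the substitution $y = x\mp(t-s)$ the pairing becomes $\frac{1}{\Gamma(\alpha)}\int_0^t (t-s)^{\alpha-1}\int f(y,s)\,\phi(y\pm(t-s))\,\dd y\,\dd s$, and splitting $\phi(y\pm(t-s)) = \phi(0) + [\phi(y\pm(t-s))-\phi(0)]$ sends the first piece to exactly $\phi(0)$ by the same Beta computation. The remainder must vanish as $t\to 0^+$, and this is where the genuine work lies, since it requires the normalised mass of $f(\cdot,s)$ to localise near the origin as $s\to 0^+$. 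I expect this to be the main obstacle and to need either an explicit concentration estimate on $f$ or an argument exploiting the joint smallness of $s$ and $t-s$ against the integrable weight $(t-s)^{\alpha-1}$.

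Finally, for necessity I would run the mass computation backwards: $\int_{\mathbb R} u_p\,\dd x = I^\alpha F(t) \equiv 1$ gives, upon applying the Riemann--Liouville derivative $D^\alpha$ and using $D^\alpha 1 = t^{-\alpha}/\Gamma(1-\alpha)$, precisely the condition \eqref{eqn:fCond}. The necessity of $f\ge 0$ I would extract by passing to the characteristic variable, where $u_p$ restricted to $x\mp t = C$ equals $I^\alpha g$ with $g(s)=f(C+s,s)$ for the upper sign; nonnegativity of $u_p$ then constrains $g$ through the behaviour near the foot of each characteristic, where $I^\alpha g(t)\sim g(0^+)\,t^\alpha/\Gamma(\alpha+1)$. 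I regard this pointwise sign claim as the second subtle point, because $I^\alpha g\ge 0$ only controls $g$ in an averaged sense; I would therefore have to combine the characteristic reduction with the forced concentration to the Dirac mass to pin the sign down everywhere, and I anticipate this being the part of the argument most in need of care.
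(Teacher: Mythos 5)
Your proposal follows essentially the same route as the paper's proof: reduce to the particular solution by forcing the homogeneous part to vanish (the paper argues from the mass identity $\int_{\mathbb{R}} u(x,t)\,dx = \frac{t^{\alpha-1}}{\Gamma(\alpha)}\int_{\mathbb{R}}\varphi(x)\,dx + 1$ that $\varphi$ must vanish lest the mass blow up as $t\rightarrow 0^+$; your additional use of $u\geq 0$ to upgrade $\int\varphi = 0$ to $\varphi\equiv 0$ is, if anything, more careful than the paper), verify unit mass by Fubini and the Beta integral $B(\alpha,1-\alpha)=\Gamma(\alpha)\Gamma(1-\alpha)$, derive \eqref{eqn:fCond} as necessary from the conservation structure (the paper integrates the PDE in $x$ using the pointwise representation together with $u(\pm\infty,t)=0$, which is your ``apply $D^\alpha$ to $I^\alpha F\equiv 1$'' computation in reverse), and test against $\psi\in C_c^\infty$ with the splitting $\psi = \psi(0)+(\psi-\psi(0))$ for the initial condition.

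The problem is that you never prove the two steps you yourself flag, so the proposal is not a complete proof. For the remainder in the delta limit, the paper substitutes $s=tz$, bounds the inner integral by $\sup_{x\in\mathbb{R}}|\psi(tx)-\psi(0)|$ times the mass of $f(\cdot,tz)$, uses \eqref{eqn:fCond} and the Beta integral to collapse the double integral to $\sup_{x\in\mathbb{R}}|\psi(tx)-\psi(0)|$, and invokes uniform continuity of $\psi$. Your reluctance to do this is, however, well founded: since $x\mapsto tx$ is a bijection of $\mathbb{R}$, that supremum equals $\sup_{y}|\psi(y)-\psi(0)|$ and does not tend to zero; the estimate only works if the relevant $x$-range is effectively $O(t)$, i.e.\ if the mass of $f(\cdot,s)$ concentrates at the origin as $s\rightarrow 0^+$ --- exactly the localisation you identified as the missing ingredient. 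Indeed, $f(x,t)=\frac{t^{-\alpha}}{\Gamma(1-\alpha)}\,g(x-a)$ with $g\geq 0$, $\int g = 1$, $a>0$ satisfies $f\geq 0$ and \eqref{eqn:fCond}, yet \eqref{eqn:DensitySol} then converges weakly to $g(\cdot-a)$ rather than to $\delta$, so this step cannot be closed from \eqref{eqn:fCond} alone by any argument. Likewise, for the necessity of $f\geq 0$ you correctly note that positivity of $I^\alpha g$ along characteristics constrains $g$ only in an averaged sense; your sketch does not resolve this, and the paper omits this direction entirely (its necessity argument yields only \eqref{eqn:fCond}). In short: same skeleton as the paper, an accurate diagnosis of where the real difficulties sit, but those difficulties are left unresolved --- and the first one is, as stated, unresolvable without an extra concentration hypothesis on $f$.
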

\begin{proof}
As above, we consider only the "$+$" case. Assume that $u$ is the solution (\ref{eqn:DensityProb}). Integrating the PDE with respect to $x$ over $\mathbb{R}$ with the integro-differential representation (\ref{eqn:MatDerInt}) we obtain
\begin{equation}
    \begin{split}
        \frac{1}{\Gamma(1-\alpha)} &\frac{d}{dt} \int_0^t (t-s)^{-\alpha} \left(\int_{-\infty}^\infty u(x - (t-s),s) dx \right) ds \\
        &+ \frac{1}{\Gamma(1-\alpha)} \int_0^t (t-s)^{-\alpha} \int_{-\infty}^\infty \frac{\partial}{\partial x} u(x - (t-s), s) ds = \int_{-\infty}^\infty f(x, t)dx. 
    \end{split}
\end{equation}
Since for any time $t\geq 0$ the $x-$integral over the whole space is equal to $1$ and $u(\pm\infty, t) = 0$ we further have
\begin{equation}
    \frac{1}{\Gamma(1-\alpha)} \frac{d}{dt} \int_0^t (t-s)^{-\alpha} ds = \int_{-\infty}^\infty f(x, t)dx.
\end{equation}
The left-hand side of the above can be evaluated to obtain (\ref{eqn:fCond}) proving that this is the necessary condition. 

Now, assume that (\ref{eqn:fCond}) holds. We know from Proposition \ref{prop:Properties} that the solution of the initial value problem is given by (\ref{eqn:MatDerPDESol}). Integrating it over the whole $x-$space and using our assumption on $f$ we obtain
\begin{equation}
    \int_{-\infty}^\infty u(x,t) dx = \frac{t^{\alpha-1}}{\Gamma(\alpha)} \int_{-\infty}^\infty \varphi(x - t) dx + \frac{1}{\Gamma(\alpha)\Gamma(1-\alpha)} \int_0^t (t-s)^{\alpha-1} s^{-\alpha}ds. 
\end{equation}
The integral on the right can be computed exactly in terms of the beta function
\begin{equation}
    \frac{1}{\Gamma(\alpha)\Gamma(1-\alpha)} \int_0^t (t-s)^{\alpha-1} s^{-\alpha}ds = \frac{1}{\Gamma(\alpha)\Gamma(1-\alpha)} \int_0^1 (1-s)^{\alpha-1} s^{-\alpha}ds = \frac{B(\alpha, 1-\alpha)}{\Gamma(\alpha)\Gamma(1-\alpha)} = 1. 
\end{equation}
Whence,
\begin{equation}
    \int_{-\infty}^\infty u(x,t) dx = \frac{t^{\alpha-1}}{\Gamma(\alpha)} \int_{-\infty}^\infty \varphi(x) dx + 1.
\end{equation}
Now, since $\varphi$ is integrable it has to vanish because otherwise, the right-hand side would become unbounded as $t\rightarrow 0^+$. Hence, $\int u(x,t)dx = 1$ and we also have (\ref{eqn:DensitySol}). We are left in showing that the initial condition is the Dirac delta. To this end, take any $\psi\in C_c^\infty(\mathbb{R})$, multiply it by (\ref{eqn:MatDerPDESol}), and integrate to arrive at
\begin{equation} 
    \int_{-\infty}^\infty u(x,t) \psi(x) dx = \frac{1}{\Gamma(\alpha)} \int_0^t (t-s)^{\alpha-1} \left( \int_{-\infty}^\infty f(x - (t-s), s) \psi(x) dx \right) ds
\end{equation}	
and thus (\ref{eqn:fCond}) is sufficient. However, by condition (\ref{eqn:fCond}) we can write
\begin{equation}
    \psi(0) \int_{-\infty}^\infty f(x - (t-s), s) \psi(x) dx = \psi(0) \frac{t^{-\alpha}}{\Gamma(1-\alpha)},
\end{equation}
which implies
\begin{equation}
    \int_{-\infty}^\infty f(x - (t-s), s) \psi(x) dx = \int_{-\infty}^\infty f(x - (t-s), s) \left(\psi(x)-\psi(0)\right) dx + \psi(0)\frac{s^{-\alpha}}{\Gamma(1-\alpha)}.
\end{equation}
Therefore, once again utilizing the definition of beta function, we obtain
\begin{equation}
    \int_{-\infty}^\infty u(x,t) \psi(x) dx = \psi(0) + \frac{1}{\Gamma(\alpha)} \int_0^t (t-s)^{\alpha-1} \left( \int_{-\infty}^\infty f(x - (t-s), s) \left(\psi(x)-\psi(0)\right) dx\right) ds.
\end{equation}
Hence, we are left in showing that the double integral on the right-hand side vanishes for $t\rightarrow 0^+$. To see this, change the time variable $s = t z$ to have
\begin{equation}
    \begin{split}
        &\left|\frac{1}{\Gamma(\alpha)} \int_0^t (t-s)^{\alpha-1} \left( \int_{-\infty}^\infty f(x - (t-s), s) \left(\psi(x)-\psi(0)\right) dx\right) ds\right|\\
        &\leq \frac{t^{\alpha}}{\Gamma(\alpha)} \int_0^1 (1-z)^{\alpha-1} \left( \int_{-\infty}^\infty f(x - t(1-z), tz) \left|\psi(tx)-\psi(0)\right| dx\right) dz \\
        &\leq \sup_{x\in\mathbb{R}} |\psi(tx) -\psi(0)|\frac{t^{\alpha}}{\Gamma(\alpha)} \int_0^t (1-z)^{\alpha-1} \left( \int_{-\infty}^\infty f(x - t(1-z), tz) dx\right) dz.
    \end{split}
\end{equation}
Now, since the condition (\ref{eqn:fCond}) is satisfied we obtain 
\begin{equation}
    \begin{split}
        &\left|\frac{1}{\Gamma(\alpha)} \int_0^t (t-s)^{\alpha-1} \left( \int_{-\infty}^\infty f(x - (t-s), s) \left(\psi(x)-\psi(0)\right) dx\right) ds\right|\\
        &\leq \sup_{x\in\mathbb{R}} |\psi(tx) -\psi(0)|\frac{t^{\alpha}}{\Gamma(\alpha)} \int_0^t (1-z)^{\alpha-1} \left(\int_{-\infty}^\infty f(x, tz) dx\right) dz \\
        &= \sup_{x\in\mathbb{R}} |\psi(tx) -\psi(0)|\frac{1}{\Gamma(\alpha)\Gamma(1-\alpha)} \int_0^t (1-z)^{\alpha-1} z^{-\alpha} dz = \sup_{x\in\mathbb{R}} |\psi(tx) -\psi(0)|.
    \end{split}
\end{equation}
By the uniform convergence of $\psi$ the right-hand side of the above goes to $0$ as $t\rightarrow 0^+$. Therefore, the condition (\ref{eqn:fCond}) is sufficient. 
\end{proof}

\begin{ex}
\label{ex:waitFirstLW}
Consider the scaling limit of wait-first LW \cite{Bec04, Kotulski1995,Jurlewicz2012} given by \eqref{def:fcltWFLW}. Then, its PDF $u(x,t)$ satisfies (see \cite{Jurlewicz2012} , formula (5.10))
\begin{equation}
\label{eqn:ULWProblem}
\matder{\alpha}{+} u(x,t) = \frac{t^{-\alpha}}{\Gamma(1-\alpha)}\delta(x), \quad u(x,0) = \delta(x), \quad x \geq 0.
\end{equation}
A straightforward calculation verifies that the solvability condition \eqref{eqn:fCond} is satisfied, and hence by (\ref{eqn:DensitySol}) we have 
\begin{equation}
\label{eqn:solWaitFirstLW}
u(x,t) = \frac{1}{\Gamma(\alpha)\Gamma(1-\alpha)} \int_0^t (t-s)^{\alpha-1} \delta(x-(t-s)) s^{-\alpha} ds = \frac{\sin(\pi\alpha) }{\pi}x^{\alpha-1}(t-x)^{-\alpha} H(t-x), \quad x \geq 0,
\end{equation}
where $H$ is the Heaviside step function and we have used the reflection formula for gamma function $\Gamma(\alpha)\Gamma(1-\alpha) = \pi/\sin(\pi\alpha)$. This result has previously been obtained in the cited literature by different purely stochastic means. 
\end{ex}

\begin{ex}
\label{ex:jumpFirstLW}
For the jump-first version of the L\'evy Walk, the PDE of its scaling limit \eqref{def:fcltJFLW} is the following (see \cite{Jurlewicz2012}, formula (5.13))
\begin{equation}
\matder{\alpha}{+} u(x,t) = \frac{\alpha}{\Gamma(1-\alpha)}\int_{t}^\infty \delta(x-u) u^{-\alpha-1} du = \frac{\alpha x^{-\alpha-1}}{\Gamma(1-\alpha)} H(x-t), \quad u(x,0) = \delta(x),
\end{equation}
where $x \geq 0$. The solvability condition \eqref{eqn:fCond} is satisfied, since
\begin{equation}
\int_{-\infty}^\infty \frac{\alpha x^{-\alpha-1}}{\Gamma(1-\alpha)} H(x-t) dx = \frac{\alpha}{\Gamma(1-\alpha)} \int_t^\infty x^{-\alpha-1} dx = \frac{t^{-\alpha}}{\Gamma(1-\alpha)},
\end{equation}
hence by (\ref{eqn:DensitySol}) we have 
\begin{equation}
u(x,t) = \frac{\alpha}{\Gamma(\alpha)\Gamma(1-\alpha)} \int_0^t(t-s)^{\alpha-1} (x-(t-s))^{-\alpha-1} H(x-t) ds.
\end{equation}
and therefore
\begin{equation}
u(x,t) = \frac{\alpha H(x-t)}{\Gamma(\alpha)\Gamma(1-\alpha)} \int_0^t z^{\alpha-1} (x-z)^{-\alpha-1} dz = \frac{\alpha t^\alpha H(x-t)}{\Gamma(\alpha)\Gamma(1-\alpha)} \int_0^1 w^{\alpha-1} (x-t w)^{-\alpha-1} dw, 
\end{equation} 
where we first substituted $z = t-s$, and later $z = tw$. The resulting integrand has a primitive, and hence
\begin{equation}
\label{eqn:solJumpFirstLW}
u(x,t) =  \frac{\alpha t^\alpha H(x-t)}{\Gamma(\alpha)\Gamma(1-\alpha)} \left[\frac{w^\alpha (x-t w)^{-\alpha}}{\alpha x}\right]_{w=0}^{w=1} =\frac{\sin(\pi\alpha)}{\pi} \frac{1}{x}\left(\frac{t}{x-t}\right)^\alpha H(x-t),
\end{equation}
which coincides with formula (5.12) in \cite{Jurlewicz2012} obtained by different methods.
\end{ex}

\begin{ex}
\label{ex:standardLW}
For the scaling limit of the standard L\'evy Walk \eqref{def:fcltLW} the PDE is the following (see \cite{Magdziarz2015LWFCLT}, Theorem 5.6 and Example 5.10)
\begin{equation}
\matder{\alpha}{+} u(x,t) = \frac{t^{-\alpha}}{\Gamma(1-\alpha)} \delta(x-t)  , \quad u(x,0) = \delta(x),
\label{standLWequation}
\end{equation}
where $x\geq 0$. The solvability condition \eqref{eqn:fCond} is satisfied, since
\begin{equation}
\int_{-\infty}^\infty \frac{ t^{-\alpha}}{\Gamma(1-\alpha)} \delta(x-t) dx = \frac{t^{-\alpha}}{\Gamma(1-\alpha)},
\end{equation}
hence by (\ref{eqn:DensitySol}) we have 
\begin{equation}
u(x,t) = \frac{\delta(x-t)}{\Gamma(\alpha)\Gamma(1-\alpha)} \int_0^t  (t-s)^{\alpha-1} s^{-\alpha} ds.
\end{equation}
Note that the last integral after two substitutions, $z = t-s$ and $z = tw$, can be rewritten as
\begin{equation}
u(x,t) = \delta(x-t)\frac{1}{\Gamma(\alpha)\Gamma(1-\alpha)} \int_0^1  w^{\alpha-1} (1-w)^{-\alpha} dw. 
\label{solvability3}
\end{equation} 
The on right hand side of (\ref{solvability3}) one can identify the integral over the probability distribution function of beta random variable $B(\alpha,1-\alpha)$.  Hence, we obtain the following solution
\begin{equation}
\label{eqn:solStandardLW}
u(x,t) =  \delta(x-t),\qquad x \geq 0,
\end{equation}
which is obvious from the point of view of the macroscopic stochastic interpretation and at the same time unapparent for the problem formulation in (\ref{standLWequation}).
\end{ex}

\section{Finite-volume upwind method}
The integro-differential form of the fractional material derivative (\ref{eqn:MatDerInt}) can be used to devise a numerical approximation scheme. Having in mind the problem of finding pdf (\ref{eqn:DensityProb}) we would like to construct a method that, at least approximately, is conservative and can deal with singular sources. The first natural choice is to use one of the finite-volume methods \cite{leveque2002finite} and here, we propose one of the simplest ones. To wit, for the uniform space-time mesh with step $h>0$
\begin{equation}
x_i = ih, \quad t_n = nh, \quad i\in\mathbb{Z}, \quad n\in\mathbb{N},
\end{equation}
we partition the real line into finite cells (volumes)
\begin{equation}
\mathbb{R} = \bigcup_{i = -\infty}^\infty (x_{i-\frac{1}{2}}, x_{i+\frac{1}{2}}], \quad x_{i \pm \frac{1}{2}} := \left(i \pm \frac{1}{2}\right)h. 
\end{equation}
Note that we use the same step size for both space- and time-variables to anticipate the duality between them. Now, we integrate (\ref{eqn:MatDerInt}) over an arbitrary cell
\begin{equation}
\begin{split}
    \frac{1}{\Gamma(1-\alpha)} &\frac{d}{dt} \int_0^t (t-s)^{-\alpha} \left( \int_{x_{i-\frac{1}{2}} \mp (t-s)}^{x_{i+\frac{1}{2}} \mp (t-s)} u(y, s) dy\right) ds \\
    &\pm \frac{1}{\Gamma(1-\alpha)} \int_0^t (t-s)^{-\alpha} \left(u(x_{i+\frac{1}{2}} \mp (t-s), s) - u(x_{i-\frac{1}{2}} \mp (t-s),s)\right)ds = \int_{x_{i-\frac{1}{2}} }^{x_{i+\frac{1}{2}}} f(x,t) dx.
\end{split}
\end{equation}
Now, we would like to take a cell average of the above. To this end, divide the equation by $h$ and define
\begin{equation}
\overline{u}(x_i, t) := \frac{1}{h} \int_{x_{i-\frac{1}{2}} }^{x_{i+\frac{1}{2}}} u(x,t) dx,
\end{equation}
to obtain
\begin{equation}
\label{eqn:Conservation}
\begin{split}
    \frac{1}{\Gamma(1-\alpha)} &\frac{d}{dt} \int_0^t (t-s)^{-\alpha} \overline{u}(x_i \mp (t-s), s) ds \\
    &\pm \frac{1}{\Gamma(1-\alpha)} \int_0^t (t-s)^{-\alpha} \frac{u(x_{i+\frac{1}{2}} \mp (t-s), s) - u(x_{i-\frac{1}{2}} \mp (t-s),s)}{h}ds = \overline{f}(x_i, t).
\end{split}
\end{equation}
The finite-volume method is based on discretizing the above conservation law and numerically approximating the cell averages of the unknown $u$. Therefore, the numerical solution will be conserved and we move away from pointwise values. We only have to assume that $u$ and $f$ have well-defined spatial averages. For example, dealing with $f(x,\cdot) \propto \delta(x)$ is feasible.  

Now, in order to produce a fully discrete scheme, we have to discretize the temporal variable. This can be done using the L1 scheme which in our case is based on approximating $\overline{u}(x_i, t)$ by piecewise constant function on each time subinterval $(t_j, t_{j+1})$. That is, the integral above can be discretized as follows
\begin{equation}
\begin{split}
    \frac{1}{\Gamma(1-\alpha)} &\int_0^{t_n} (t_n-s)^{-\alpha} \overline{u}(x_i \mp (t_n-s),s) ds \\
    &= \frac{1}{\Gamma(1-\alpha)}\sum_{j=0}^{n-1} \int_{t_{j}}^{t_{j+1}} (t_n-s)^{-\alpha} \overline{u}(x_i \mp (t_n-s),s) ds = \frac{h^{1-\alpha}}{\Gamma(2-\alpha)}\sum_{j=0}^{n-1} b_{n-j} u_{i \mp (n-j)}^{j} + r_{in}(h),
\end{split}
\end{equation}
where $r_{in}(h)$ is the remainder and we denoted $u_i^k := \overline{u}(x_i, t_k)$. The coefficients are
\begin{eqnarray}
\label{eqn:Coefficients}
b_k = k^{1-\alpha} - (k-1)^{1-\alpha}.
\end{eqnarray}
The time derivative can now be approximated by the finite difference so that
\begin{equation}
\label{eqn:tDisc}
\begin{split}
    \frac{1}{\Gamma(1-\alpha)} &\frac{d}{dt} \int_0^{t_{n}} (t_{n}-s)^{-\alpha} \overline{u}(x_i \mp (t_{n}-s), s) ds \\
    &= \frac{h^{-\alpha}}{\Gamma(2-\alpha)} \left(u^{n}_{i\mp 1} + \sum_{j=0}^{n-1} b_{n-j+1} u^{j}_{i \mp (n-j+1)} - b_{n-j} u^{j}_{i \mp (n-j)}\right) + R_{in}^1(h).
\end{split}
\end{equation}
It is a classical result that for sufficiently smooth functions, say twice differentiable, this approximation is of order $2-\alpha$, that is the remainder $R_{nm}$ satisfies \cite{Li19a}
\begin{equation}
|R^1_{nm}| \leq C_u h^{2-\alpha}.
\end{equation}
Furthermore, a sharp estimate of this constant has been found in \cite{plociniczak2023linear}. We also remark that for less regular functions, the order of discretization can diminish, as will be discussed below. 

So far, the construction of the scheme did not involve the spatial variation of the unknown quantity. We now make an assumption that the pointwise value of $u$ can be approximated by its cell average
\begin{equation}
\label{eqn:upwindAssum}
\overline{u}(x_i, t) \approx
\begin{cases}
    u(x_{i + \frac{1}{2}}, t), & \text{in the "+" case}, \\
    u(x_{i - \frac{1}{2}}, t), & \text{in the "-" case}, \\
\end{cases}
\end{equation}
which is accurate with $O(h)$ as $h\rightarrow 0^+$. Observe that we approximate with different values of $u$ for different cases of fractional material derivative. This is closely associated with the direction of the characteristic. The same assumption is made when deriving the upwind method \cite{leveque2002finite}. Therefore, the discretization of the spatial part in (\ref{eqn:Conservation}) now becomes in the "+" case

\begin{equation}
\label{eqn:xDisc}
\begin{split}
    \frac{h^{-1}}{\Gamma(1-\alpha)} &\int_0^{t_n} (t_n-s)^{-\alpha} \left(\overline{u}(x_{i} - (t_n-s), s) - \overline{u}(x_{i-1} - (t_n-s),s)\right)ds \\
    &= \frac{h^{-\alpha}}{\Gamma(2-\alpha)}\sum_{j=0}^{n-1} b_{n-j} \left(u^{j}_{i-{n+j}} - u^j_{i-{n+j}-1}\right) + R_{in}^2(h),
\end{split}
\end{equation} 
and similarly in the "-" variant. Truncating the remainders and combining (\ref{eqn:tDisc}) with (\ref{eqn:xDisc}) we arrive at a discretization of the conservation form of our equation (\ref{eqn:Conservation})


\begin{equation}
\label{eqn:MatDerDisc}
{\delta_{\pm}^\alpha u^n_i := \frac{h^{-\alpha}}{\Gamma(2-\alpha)}\left[u_{i\mp 1}^n - \sum_{j=0}^{n-1}\left(b_{n-j}-b_{n-j+1}\right)u^j_{i\mp{(n-j+1)}}\right] = f^n_{i}.}
\end{equation}
Note how similar terms cancel in the "$\pm$" cases thanks to the specific choice in the upwind assumption (\ref{eqn:upwindAssum}). Although the above scheme has been devised for cell averages of $u$, we can think of $u^n_i$ as an approximation for the \emph{pointwise} values. For smooth functions, this is clearly the case. We further expect that the left-hand side of this approximates the fractional derivative to some order $r$ as $h\rightarrow 0^+$ (see \cite{Li19a,Plo21}). For the result that follows, we assume that
\begin{equation}
\label{eqn:Truncation}
\left\|\delta^\alpha_\pm u^n - \matder{\alpha}{\pm} u(\cdot,t_n) \right\|_{2,h} = \|R_n(h)\|_{2,h} \leq C h^r, \quad r > 0,
\end{equation}
where the discrete norm is defined by
\begin{equation}
\label{eqn:Norm}
\|u^n\|_{p,h} := \left(h\sum_{i\in\mathbb{Z}} |u^n_i|^p\right)^\frac{1}{p}, 
\end{equation}
where in case of continuous function we identify $u^n_i = u(x_i,t_n)$. The value of the exponent $r$ strongly depends on the regularity of the approximated solution. For example, for $C^2[0,T]$ functions we have $r=2-\alpha$, which is the upper bound of the order \cite{plociniczak2023linear}. For less regular functions, for example, the typical sub-diffusive behavior $u_m(\cdot, t) \approx t^{\alpha-m}$ with $m=1,2$ we have $r = \alpha$ (see Lemma 5.2 in \cite{stynes2017error}). The relation between $r$ and the regularity of the solution is certainly worth further study in future work. However, it is clear that, thanks to our pointwise representation of the fractional material derivative, the truncation error is inherited from the Riemann-Liouville case. 

As can also be seen from (\ref{eqn:MatDerDisc}), the nonlocality of the discretization is evident: the value at $(x_i, t_n)$ depends on all previous values located near the characteristic. The discretization introduces a nonlocal stencil that is an approximation of the line $x \mp t = C$. It is natural to ask how the above discretization performs for solving some model equation. Suppose we would like to find the solution of
\begin{equation}
\label{eqn:ModelProb}
\begin{cases}
\matder{\alpha}{\pm} u(x,t) = f(x,t), & x \in \mathbb{R}, \quad 0<t\leq T, \\
u(x,0) = 0, & x \in \mathbb{R}. 
\end{cases}
\end{equation}
Then, a natural numerical scheme for the above is $\delta^\alpha_\pm u^n_i = f^n_i$, that is by (\ref{eqn:MatDerDisc})

\begin{equation}
\label{eqn:NumMetModel}
{
u^n_{i\mp 1} = \sum_{j=0}^{n-1}\left(b_{n-j}-b_{n-j+1}\right)u^j_{i\mp(n-j+1)} + h^\alpha \Gamma(2-\alpha)f^n_i, \quad u^0_i = 0.}
\end{equation}
Below we show that the method is stable and convergent.
\begin{prop}[Stability]
\label{prop:Stability}
Let $u^n_m$ be the solution of (\ref{eqn:NumMetModel}) for $f(\cdot, t) \in L^2(\mathbb{R})$ for each $t\in[0,T]$ and $f(x, \cdot) \in L^\infty(0,T)$ for each $x\in\mathbb{R}$. Then, we have
\begin{equation}
\label{eqn:Stability}
\|u^n\|_{2,h} \leq \max_j \|f^j\|_{2,h}, \quad n \in\mathbb{N}.
\end{equation} 
\end{prop}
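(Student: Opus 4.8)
The plan is to run a strong induction on the time level $n$, exploiting two structural properties of the L1 memory weights together with the translation invariance of the discrete norm $\|\cdot\|_{2,h}$. First I would record the sign and summation pattern of the coefficients. Writing $d_k := b_k - b_{k+1}$ with $b_k = k^{1-\alpha} - (k-1)^{1-\alpha}$, the strict concavity of $s \mapsto s^{1-\alpha}$ on $(0,\infty)$ for $0<\alpha<1$ forces the sequence $b_k$ to be positive and strictly decreasing, so that every $d_k>0$. Moreover the sum telescopes,
\begin{equation}
    \sum_{k=1}^{n} d_k = b_1 - b_{n+1} = 1 - b_{n+1} \in (0,1),
\end{equation}
since $b_1 = 1$ and $b_{n+1}>0$. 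Thus the memory part of the scheme carries total weight strictly below one, which is the feature that should prevent amplification of the homogeneous propagation.

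Next I would take the discrete norm of the marching formula (\ref{eqn:NumMetModel}). Re-indexing so that $u^n_i$ stands alone on the left, the scheme reads $u^n_i = \sum_{k=1}^n d_k\, u^{n-k}_{i \mp k} + h^\alpha\Gamma(2-\alpha)\, f^n_{i \pm 1}$. Because every spatial shift is an isometry of $\ell^2(\mathbb{Z})$, we have $\|u^{n-k}_{\cdot \mp k}\|_{2,h} = \|u^{n-k}\|_{2,h}$ and likewise for the source. Applying Minkowski's inequality to the convolution therefore collapses the scheme into a scalar recursion in the norms,
\begin{equation}
    \|u^n\|_{2,h} \leq \sum_{k=1}^n d_k \, \|u^{n-k}\|_{2,h} + h^\alpha\Gamma(2-\alpha)\,\|f^n\|_{2,h}, \qquad \|u^0\|_{2,h} = 0.
\end{equation}
Setting $M := \max_j \|f^j\|_{2,h}$ and assuming inductively that $\|u^{n-k}\|_{2,h} \le M$ for $1 \le k \le n$, the memory sum is bounded by $(1-b_{n+1})M$, and the task reduces to showing that the slack $b_{n+1}M$ leaves enough room to absorb the source contribution $h^\alpha\Gamma(2-\alpha)\,\|f^n\|_{2,h} \le h^\alpha\Gamma(2-\alpha)\,M$.

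I expect this final absorption to be the main obstacle, and it is precisely where the interplay between the defect $b_{n+1}$ and the source prefactor $h^\alpha\Gamma(2-\alpha)$ must be used carefully: a crude estimate degrades the target constant $1$ into a factor that accumulates, over the whole history, like the discrete analogue of $t_n^\alpha/\Gamma(1+\alpha)$ inherited from the solution formula (\ref{eqn:MatDerPDESol}). To close the bound at constant $1$ I would first try to verify the pointwise inequality $h^\alpha\Gamma(2-\alpha)\le b_{n+1}$ on the admissible range of $n$ under the standing mesh convention, so that $(1-b_{n+1}) + h^\alpha\Gamma(2-\alpha)\le 1$ and the induction propagates directly. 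If a uniform such estimate is unavailable, I would instead pass to a discrete Fourier transform in the spatial variable, where each shift becomes a unimodular multiplier and the estimate decouples into a one-parameter family of scalar transfer relations; controlling the corresponding resolvent mode by mode and recombining through Parseval's identity is the natural fallback for bounding $\|u^n\|_{2,h}$ by $M$.
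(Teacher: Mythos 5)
Your derivation of the norm recursion is correct, and it is in substance an elementary re-packaging of the paper's own argument: where you use positivity of $d_k=b_k-b_{k+1}$, the telescoping identity $\sum_{k=1}^n d_k = 1-b_{n+1}$, the shift-invariance of $\|\cdot\|_{2,h}$ and Minkowski's inequality, the paper multiplies (\ref{eqn:NumMetModel}) by $hu^n_{i-1}$, sums over $i$, and applies Cauchy--Schwarz together with the $\epsilon$-Cauchy inequality with $\epsilon=b_{n+1}h^{-\alpha}/\Gamma(2-\alpha)$, which produces the squared analogue of your recursion,
\begin{equation}
\|u^n\|^2_{2,h}\;\le\;\sum_{j=0}^{n-1}\bigl(b_{n-j}-b_{n-j+1}\bigr)\|u^j\|^2_{2,h}\;+\;\frac{\bigl(h^\alpha\Gamma(2-\alpha)\bigr)^2}{b_{n+1}}\,\|f^n\|^2_{2,h},
\end{equation}
followed by the same telescoping induction you set up. So up to the point you reach, the two arguments are equivalent.

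The step you flag as the ``main obstacle'' is, however, a genuine gap, and neither of your proposed exits closes it. The absorption inequality $h^\alpha\Gamma(2-\alpha)\le b_{n+1}$ cannot hold uniformly in $n$ for fixed $h$: since $b_{n+1}=(n+1)^{1-\alpha}-n^{1-\alpha}\le(1-\alpha)n^{-\alpha}$, it forces $t_n^{\alpha}\le(1-\alpha)/\Gamma(2-\alpha)=1/\Gamma(1-\alpha)$, i.e.\ $t_n\le\Gamma(1-\alpha)^{-1/\alpha}<1$, so the induction with constant $1$ propagates only on such a bounded time horizon, not for all $n\in\mathbb{N}$. The Fourier/Parseval fallback cannot repair this either, because the obstruction is not an artifact of crude estimation: if $f$ equals a constant $F>0$ on a very large spatial box, then away from the box ends the scheme coincides with the scalar L1 recursion for $D^\alpha U=F$, $U^0=0$, whose solution is consistent with $U(t)=F\,t^{\alpha}/\Gamma(1+\alpha)$; hence $\|u^n\|_{2,h}/\max_j\|f^j\|_{2,h}$ approaches $t_n^{\alpha}/\Gamma(1+\alpha)>1$ once $t_n>\Gamma(1+\alpha)^{1/\alpha}$ --- exactly the degradation you anticipated. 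You should also know that the paper's proof is incomplete at precisely this point: with its choice of $\epsilon$, the source term is the one displayed above, namely $\bigl(h^\alpha\Gamma(2-\alpha)\bigr)^2 b_{n+1}^{-1}\|f^n\|^2_{2,h}$, and the paper's silent replacement of it by $b_{n+1}\max_j\|f^j\|^2_{2,h}$ is exactly the unproved inequality $h^\alpha\Gamma(2-\alpha)\le b_{n+1}$. So your analysis correctly isolates the crux; what your proposal misses is that the needed inequality is not merely hard to verify but false beyond a fixed time of order $\Gamma(1-\alpha)^{-1/\alpha}$, so the bound (\ref{eqn:Stability}) can only be expected for small $T$ or with a $T$-dependent constant.
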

\begin{proof}
We will focus only on the "$+$" case. We start by multiplying (\ref{eqn:NumMetModel}) by $h u^n_{i-1}$ and summing over $i$ to obtain
\begin{equation}
\|u^n\|^2_{2,h} = \sum_{j=0}^{n-1}\left(b_{n-j}-b_{n-j+1}\right) (u^j, u^n) + h^\alpha \Gamma(2-\alpha) (f^n, u^n),
\end{equation}
where $(\cdot, \cdot)$ is the scalar product associated with the discrete second norm. Now, we use the Cauchy inequality $a b \leq (a^2+b^2)/2$ and the Schwarz inequality
\begin{equation}
\|u^n\|^2_{2,h} \leq \frac{1}{2} (1-b_{n+1}) \|u^n\|^2_{2,h} + \frac{1}{2} \sum_{j=0}^{n-1}\left(b_{n-j}-b_{n-j+1}\right) \|u^j\|^2_{2,h} + h^\alpha \Gamma(2-\alpha) \|f^n\|_{2,h}\|u^n\|_{2,h},
\end{equation}
since $b_{n-j}-b_{n-j+1} \geq 0$ and the series is telescoping. Now, we use the $\epsilon$-Cauchy inequality for the last term on the right-hand side, that is, $a b \leq (a^2/\epsilon + \epsilon b^2)/2$, here with $\epsilon = b_{n+1}h^{-\alpha}/\Gamma(2-\alpha)$ to arrive at
\begin{equation}
\|u^n\|^2_{2,h} \leq \sum_{j=0}^{n-1}\left(b_{n-j}-b_{n-j+1}\right) \|u^j\|^2_{2,h} + b_{n+1} \max_j \|f^j\|^2_{2,h}.
\end{equation}
Having the main inequality recurrence, we proceed by induction. First, for $n=1$ we have
\begin{equation}
\|u^1\|^2_{2,h} \leq (1-b_2) \|u^0\|^2_{2,h} + b_2 \max_j \|f^j\|_{2,h} = (2^{1-\alpha}-1)\max_j \|f^j\|^2_{2,h} \leq \max_j \|f^j\|^2_{2,h},
\end{equation}
by vanishing of initial condition. Next, assume that the inequality is satisfied for all $j=1,2,...,n-1$. The next step is
\begin{equation}
\|u^n\|^2_{2,h} \leq (1-b_{n+1})\max_j \|f^j\|^2_{2,h} + b_{n+1}\max_j \|f^j\|^2_{2,h} = \max_j \|f^j\|^2_{2,h} \leq (\max_j \|f^j\|_{2,h})^2,
\end{equation}
and the proof is complete. 
\end{proof}
From this, thanks to linearity, it is straightforward to prove that the method is convergent.
\begin{thm}[Convergence]
\label{thm:convergence}
Let $u(x,t)$ be the solution of (\ref{eqn:ModelProb}) and $u^n_i$ the solution of the numerical scheme (\ref{eqn:NumMetModel}). Assume that $f(\cdot, t)\in L^2(\mathbb{R})$ and the truncation error satisfies (\ref{eqn:Truncation}). Then
\begin{equation}
\|u(\cdot, t_n) - u^n\|_{2,h} \leq C h^r, 
\end{equation}
where constant $C$ depends on $\alpha$, $u$ and its derivatives, while the order $r>0$ is related to the regularity of the solution.
\end{thm}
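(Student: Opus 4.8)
The plan is to exploit the linearity of the scheme together with the stability estimate of Proposition \ref{prop:Stability}, reducing the convergence analysis to a Lax-type ``stability $+$ consistency'' argument applied to the error equation. As in the previous proofs I would treat only the ``$+$'' case. First I introduce the grid restriction of the exact solution, $U^n_i := u(x_i, t_n)$, and the pointwise error $e^n_i := U^n_i - u^n_i$. Since $u$ solves the model problem (\ref{eqn:ModelProb}), we have $\matder{\alpha}{+} u(\cdot, t_n) = f(\cdot, t_n)$, so the definition of the truncation error (\ref{eqn:Truncation}) gives
\begin{equation}
\delta^\alpha_+ U^n_i = \matder{\alpha}{+} u(x_i, t_n) + R_n(h)_i = f^n_i + R_n(h)_i,
\end{equation}
whereas the numerical solution satisfies $\delta^\alpha_+ u^n_i = f^n_i$ exactly.

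Next I would subtract these two identities. Because $\delta^\alpha_+$ is a linear operator on grid functions (it is an affine recurrence in the values $u^j$), the error satisfies the very same scheme (\ref{eqn:NumMetModel}), now driven by the truncation residual rather than by $f$:
\begin{equation}
\delta^\alpha_+ e^n_i = R_n(h)_i, \quad e^0_i = 0,
\end{equation}
where the homogeneous initial datum follows from $u(x,0) = 0$ together with $u^0_i = 0$. In other words, $e^n$ is precisely the solution produced by (\ref{eqn:NumMetModel}) when the source $f^n$ is replaced by $R_n(h)$.

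At this point Proposition \ref{prop:Stability} applies verbatim, with $R_n(h)$ in the role of the source. This yields
\begin{equation}
\|e^n\|_{2,h} \leq \max_{0 \leq j \leq n} \|R_j(h)\|_{2,h},
\end{equation}
and the consistency bound (\ref{eqn:Truncation}) then gives $\|e^n\|_{2,h} \leq C h^r$, which is exactly the assertion of the theorem, with the constant $C$ and the order $r$ inherited from the truncation estimate.

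The one point requiring care --- and really the only ``obstacle'' --- is to verify that the hypotheses under which Proposition \ref{prop:Stability} was established are genuinely met by the error equation: the proposition presumes the forcing lies in $L^2(\mathbb{R})$ spatially and is bounded in time, so one must confirm that the residuals $R_j(h)$ possess the corresponding discrete integrability and that $\max_j \|R_j(h)\|_{2,h}$ is finite and uniformly controlled in $n$. This is precisely what assumption (\ref{eqn:Truncation}) supplies. All of the genuinely delicate work has already been absorbed into the stability estimate and into the consistency bound inherited from the Riemann--Liouville L1 analysis; thanks to the linearity of $\delta^\alpha_\pm$, the convergence statement is then essentially a one-line consequence.
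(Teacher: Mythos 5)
Your proposal is correct and follows essentially the same route as the paper's own proof: by linearity the error $e^n_i = u(x_i,t_n) - u^n_i$ satisfies the scheme (\ref{eqn:NumMetModel}) with the source replaced by the truncation residual $R_n(h)$ (using that $u$ solves (\ref{eqn:ModelProb}) and the definition (\ref{eqn:Truncation})), and Proposition \ref{prop:Stability} then yields $\|e^n\|_{2,h} \leq \max_j \|R_j(h)\|_{2,h} \leq C h^r$. Your additional remark about checking that the residual satisfies the hypotheses of the stability proposition is a point the paper passes over silently, but it is handled exactly as you say, by the uniform bound in (\ref{eqn:Truncation}).
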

\begin{proof}
For the "+" case, we define the error $e^n_i = u(x_i, t_n) - u^n_i$. Using (\ref{eqn:NumMetModel}) we have
\begin{equation}
\begin{split}
    e^n_{i-1} &= u(x_{i-1}, t_n) - \sum_{j=0}^{n-1} \left(b_{n-j}-b_{n-j+1}\right) u^j_{i-n+j-1} - h^\alpha\Gamma(2-\alpha) f^n_i \\
    &=u(x_{i-1}, t_n) - \sum_{j=0}^{n-1} \left(b_{n-j}-b_{n-j+1}\right) u(x_{i-1} - (t_n - t_j)) + \sum_{j=0}^{n-1} \left(b_{n-j}-b_{n-j+1}\right) e^j_{i-n+j-1} \\
    &- h^\alpha\Gamma(2-\alpha) f^n_i.
\end{split}
\end{equation}
Now, due to (\ref{eqn:Truncation}) we can write
\begin{equation}
e^n_{i-1} =h^\alpha \Gamma(2-\alpha) \left[ \matder{\alpha}{+} u(x_i,t_n) + R_{in}(h) \right] + \sum_{j=0}^{n-1}\left(b_{n-j}-b_{n-j+1}\right) e^j_{i-n+j-1} - h^\alpha\Gamma(2-\alpha) f^n_i.
\end{equation}
But $u$ is a solution of (\ref{eqn:ModelProb}) so that
\begin{equation}
e^n_{i-1} = \sum_{j=0}^{n-1} \left(b_{n-j}-b_{n-j+1}\right) e^j_{i-n+j-1} +h^\alpha \Gamma(2-\alpha) R_{in}(h),
\end{equation}
which has exactly the same form as (\ref{eqn:NumMetModel}) with $f^n_i$ replaced by $R_{in}(h)$. Using Proposition \ref{prop:Stability} yields
\begin{equation}
\|e^n\|_{2,h} \leq \|R_{n}(h)\|_{2,h} \leq C h^{r}.
\end{equation}
and completes the proof. 
\end{proof}

We would like to investigate how the upwind finite-volume scheme (\ref{eqn:MatDerDisc}) behaves when applied to the PDF problem (\ref{eqn:DensityProb}). Most importantly, we are interested in whether the conservation of probability is satisfied. Due to Theorem (\ref{thm:Density}) we know that in order for the probability to be conserved the singularity of the source is required. Therefore, we cannot expect that the numerical method will attain a higher order of convergence. Hence, we are allowed to slightly modify the scheme to provide even better numerical resolution of the conservation property possibly loosing some convergence rate that is insubstantial in our non-smooth case. This modification is done by delaying the evaluation of the source function to $t = t_{n+1}$. That is, as a numerical scheme to solve (\ref{eqn:DensityProb}) we propose the following method

\begin{equation}
\label{eqn:NumMetDensityStepAhead}
{
u^n_{i\mp 1} = \sum_{j=0}^{n-1}\left(b_{n-j}-b_{n-j+1}\right)u^j_{i\mp(n-j+1)} + h^\alpha \Gamma(2-\alpha)f^{n+1}_i, \quad u^0_i = \psi(x_i).}
\end{equation}

We claim that this scheme is conservative provided $f$ satisfies (\ref{eqn:fCond}) and the initial condition approximates Dirac delta with
\begin{equation}
\|u^0\|_{1,h} = 1.
\end{equation}
Note that we also have to bear in mind that calculating the discrete norm (\ref{eqn:Norm}) will introduce a discretization error. This brings about a small error in the conservation property. 

\begin{thm}
	\label{thm:totalProbabilityConservation}
Let $f$ be a function satisfying (\ref{eqn:fCond}) with
\begin{equation}
\label{eqn:fCondDiscrete}
\|f^n\|_{1,h} = \frac{t_{n}^{-\alpha}}{\Gamma(1-\alpha)} + \rho(h),
\end{equation}
with the quadrature error $\rho(h)$ and $t_n = nh \leq T$. Then, provided that $\|u^0\|_{1,h} = 1$ we have
\begin{equation}
\|u^n\|_{1,h} \leq 1 + T \Gamma(2-\alpha) h^{\alpha-1} \rho(h).
\end{equation}
\end{thm}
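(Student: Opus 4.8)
The plan is to reduce the scheme to a scalar recurrence for the discrete $L^1$ norms $a_n := \|u^n\|_{1,h}$ and then close it by induction, reusing the telescoping structure of the coefficients $b_k$ that already appeared in the stability proof. Working in the ``$+$'' case, I would first take absolute values in (\ref{eqn:NumMetDensityStepAhead}), apply the triangle inequality, and use the nonnegativity $b_{n-j}-b_{n-j+1}\geq 0$ noted earlier. Multiplying by $h$ and summing over $i\in\mathbb{Z}$, the key observation is that the lattice sum $h\sum_i|\cdot|$ is invariant under the index shifts $i\mapsto i-1$ and $i\mapsto i-(n-j+1)$, so every shifted sum collapses to the corresponding norm. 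This produces the recurrence
\[
a_n \leq \sum_{j=0}^{n-1}\left(b_{n-j}-b_{n-j+1}\right)a_j + h^\alpha\Gamma(2-\alpha)\,\|f^{n+1}\|_{1,h}.
\]

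Next I would insert the discrete conservation hypothesis (\ref{eqn:fCondDiscrete}) evaluated at level $n+1$. Using $\Gamma(2-\alpha)=(1-\alpha)\Gamma(1-\alpha)$ and $t_{n+1}=(n+1)h$, the exact part of the source contributes precisely $(1-\alpha)(n+1)^{-\alpha}$, while the quadrature error contributes $h^\alpha\Gamma(2-\alpha)\rho(h)$. Two elementary facts then drive the estimate: the weights telescope, $\sum_{j=0}^{n-1}(b_{n-j}-b_{n-j+1})=b_1-b_{n+1}=1-b_{n+1}$, and the source term is dominated by the leftover coefficient. Writing $b_{n+1}=\int_n^{n+1}(1-\alpha)x^{-\alpha}\,dx$ and using the monotonicity of $x^{-\alpha}$ gives the crucial inequality $(1-\alpha)(n+1)^{-\alpha}\leq b_{n+1}$. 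This is exactly where the step-ahead evaluation at $t_{n+1}$ is used: the analogous comparison against $b_n$ would point the wrong way, so delaying the source is what makes conservation compatible with the telescoping.

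Substituting the inductive hypothesis $a_j\leq 1+D_j$ into the recurrence and absorbing $(1-\alpha)(n+1)^{-\alpha}$ into $b_{n+1}$ yields
\[
a_n \leq 1 + \sum_{j=0}^{n-1}\left(b_{n-j}-b_{n-j+1}\right)D_j + h^\alpha\Gamma(2-\alpha)\rho(h).
\]
Since the nonnegative weights sum to at most $1$, the accumulated error obeys $D_n\leq \max_{j<n}D_j + h^\alpha\Gamma(2-\alpha)\rho(h)$, and a one-line induction from $D_0=0$ (guaranteed by $\|u^0\|_{1,h}=1$) gives $D_n\leq n\,h^\alpha\Gamma(2-\alpha)\rho(h)$. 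Finally $n=t_n/h\leq T/h$ converts this into $D_n\leq T\Gamma(2-\alpha)h^{\alpha-1}\rho(h)$, which is the claimed bound.

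I expect the only genuinely delicate point to be the comparison $(1-\alpha)(n+1)^{-\alpha}\leq b_{n+1}$; once it and the recurrence are in hand, the triangle inequality, the shift-invariance of the lattice sum, the telescoping, and the error accumulation are all routine. It is worth stressing in the write-up that this single inequality is the reason the source is delayed to $t_{n+1}$, and that, up to the quadrature remainder $\rho(h)$, the scheme is exactly probability-conserving.
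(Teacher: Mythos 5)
Your proposal is correct, and its overall architecture is the same as the paper's: reduce the scheme to a scalar recurrence for $a_n=\|u^n\|_{1,h}$, telescope the weights $b_{n-j}-b_{n-j+1}$, and close by induction, with the step-ahead source term absorbed by the leftover coefficient $b_{n+1}$. The one place where you genuinely diverge is the crucial inequality $(1-\alpha)(n+1)^{-\alpha}\le b_{n+1}$: you prove it by writing $b_{n+1}=\int_n^{n+1}(1-\alpha)x^{-\alpha}\,dx$ and using monotonicity of $x^{-\alpha}$, whereas the paper invokes a ``Taylor expansion argument'' and states the bound $b_{n+1}\ge \frac{1-\alpha}{n^\alpha}+\frac{\alpha(1-\alpha)}{2}\frac{1}{n^{\alpha+1}}$, which, as printed, is false: concavity of $x\mapsto x^{1-\alpha}$ forces $b_{n+1}\le(1-\alpha)n^{-\alpha}$, so the paper's estimate only holds with $n$ replaced by $n+1$ throughout (after which it implies exactly your inequality). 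Your integral argument is therefore cleaner and sidesteps this slip while delivering precisely what the induction needs; your remark that comparison against $b_n$ ``points the wrong way'' is also the correct explanation of why the source must be delayed to $t_{n+1}$, which the paper motivates only informally. Two cosmetic differences: the paper first deduces $u^n_i\ge 0$ from $f\ge 0$ and works with equalities for the norms, while your triangle-inequality version yields the same upper bound without needing positivity of $u$; and both arguments implicitly treat $\rho(h)$ as nonnegative (an error magnitude), since otherwise multiplying the inductive bound by the weights would not preserve the inequality direction -- worth a word in the final write-up.
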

\begin{proof}
Because $f\geq 0$ we immediately infer from (\ref{eqn:NumMetDensityStepAhead}) that $u^n_j\geq 0$. Furthermore, since $b_{n-j+1} - b_{n-j}\geq 0$ we can multiply (\ref{eqn:NumMetDensityStepAhead}) by $h$ and sum over $j$ to obtain
\begin{equation}
\|u^n\|_{1,h} = \sum_{j=0}^{n-1}\left(b_{n-j}-b_{n-j+1}\right)\|u^j\|_{1,h} + h^\alpha \Gamma(2-\alpha)\|f^{n+1}\|_{1,h}
\end{equation}
Now, due to the assumption that the initial step $\|u^0\|_{1,h} =1$ and (\ref{eqn:fCondDiscrete}) we proceed to
\begin{eqnarray}
\|u^n\|_{1,h} = b_n-b_{n+1} +  \sum_{j=1}^{n-1}\left(b_{n-j}-b_{n-j+1}\right)\|u^j\|_{1,h} + \frac{1-\alpha}{(n+1)^{\alpha}} + \Gamma(2-\alpha)h^\alpha \rho(h),
\end{eqnarray}
where we have used the fact that $t_{n+1} = (n+1)h$. Now, we would like to use mathematical induction to show that
\begin{equation}
\label{eqn:Induction}
\|u^n\|_{1,h} \leq 1 + n \Gamma(2-\alpha) h^\alpha \rho(h).
\end{equation}
Because then we would have had
\begin{equation}
\|u^n\|_{1,h} \leq 1 + nh \Gamma(2-\alpha) h^{\alpha-1} \rho(h) \leq 1 + T \Gamma(2-\alpha) h^{\alpha-1} \rho(h),
\end{equation}
which is our main claim. To this end, we notice that by (\ref{eqn:Coefficients}), for $n=1$ the estimate is
\begin{equation}
\|u^1\|_{1,h} \leq 2-2^{1-\alpha} + \frac{1-\alpha}{2^{1-\alpha}} + \Gamma(2-\alpha)h^\alpha \rho(h).
\end{equation}
By elementary calculus we can easily show that the function $\alpha\mapsto 2-2^{1-\alpha} + \frac{1-\alpha}{2^{1-\alpha}}$ is positive and is bounded by $2(\ln 2 - e^{-1})/\ln 2$ from below and by $1$ from above. Hence, we have
\begin{equation}
\|u^1\|_{1,h} \leq 1 + \Gamma(2-\alpha)h^\alpha \rho(h).
\end{equation}
Now assume that the inductive assertion (\ref{eqn:Induction}) is satisfied for all $j=1,2,..., n-1$. We will prove it for $n$, that is, we have
\begin{equation}
\|u^n\|_{1,h} \leq b_n-b_{n+1} +  (1+(n-1)\Gamma(2-\alpha)h^\alpha \rho(h))\sum_{j=1}^{n-1}\left(b_{n-j+1}-b_{n-j}\right) + \frac{1-\alpha}{(n+1)^{\alpha}} + \Gamma(2-\alpha)h^\alpha \rho(h).
\end{equation}
Since the sum above is telescoping, we can further obtain
\begin{equation}
\|u^n\|_{1,h} = b_n-b_{n+1} +  (1+(n-1)\Gamma(2-\alpha)h^\alpha \rho(h))(1-b_n) + \frac{1-\alpha}{(n+1)^{\alpha}} + \Gamma(2-\alpha)h^\alpha \rho(h),
\end{equation}
or
\begin{equation}
\begin{split}
    \|u^n\|_{1,h} 
    &\leq 1-b_{n+1} + \frac{1-\alpha}{(n+1)^{\alpha}} + (n-b_n)\Gamma(2-\alpha)h^\alpha \rho(h) \\
    &\leq 1-b_{n+1} + \frac{1-\alpha}{(n+1)^{\alpha}} + n\Gamma(2-\alpha)h^\alpha \rho(h).
\end{split}
\end{equation}
Further, a simple Taylor expansion argument brings us to the estimate
\begin{equation}
b_{n+1} \geq \frac{1-\alpha}{n^\alpha} + \frac{\alpha(1-\alpha)}{2}\frac{1}{n^{\alpha+1}}.
\end{equation}
Therefore,
\begin{equation}
\|u^n\|_{1,h} \leq 1-(1-\alpha)\left(\frac{1}{n^\alpha} - \frac{1}{(n+1)^\alpha}\right) - \frac{\alpha(1-\alpha)}{2}\frac{1}{n^{\alpha+1}} + n\Gamma(2-\alpha)h^\alpha \rho(h).
\end{equation}
The term in parentheses is manifestly positive, which concludes the proof. 
\end{proof}
As we can see, the condition for the source, that is, (\ref{eqn:fCond}), grants the numerical method to conserve the probability. That is, the solution is always bounded by $1$ plus some small discretization error. For exact quadrature, there would be no $\rho$ term. 

\section{Numerical simulations and illustrations}
We would like to illustrate our results from previous sections with some numerical examples. Let us consider two examples: one benchmarking the performance of our scheme for a general model problem \eqref{eqn:ModelProb}, and the other concerning the finding probability distribution function of the scaling limit of the selected L\'evy walk scheme.

\subsection{A model problem for the material derivative}
Let us consider the solution to \eqref{eqn:ModelProb} with $u(x,0)=0$ and $f(x,t)=t^\mu$ with $\mu>0$. By \eqref{eqn:MatDerPDESol} we can easily obtain the exact solution, which is $u(x,t)=t^{\alpha+\mu}/\Gamma(1+\alpha)$. Therefore, we can compute the scheme's error in a straightforward way and determine the rate of convergence. In Figure \ref{fig:OrdersExactRegular} we present the results of the convergence estimation for the problem considered. We have calculated the convergence rates for a spatio-temporal step $h\in {\{2^{-11},2^{-10},\ldots,2^{-4}\}}$ and tested it for several values of the parameter $\alpha$, $\alpha\in\{0.1,0.25,0.5,0.75,0.9\}$. The results obtained confirm the convergence of the proposed method proved in Theorem \ref{thm:convergence} for all parameters tested. Moreover, our numerical experiments show that in case of $u(x,t)=t^{\alpha+1}/\Gamma(1+\alpha)$ the conjecture related to the convergence rate of the proposed numerical scheme under the $L^\infty$~ norm is $\min(2-\alpha,\mu+\alpha)$ with $\mu=1$. The analogous convergence rate for $u(x,t)=t^{\alpha+2}/\Gamma(1+\alpha)$ is $2-\alpha$.
\begin{figure}[!ht]
\centering
\begin{subfigure}{0.49\textwidth}
\includegraphics[scale=0.64]{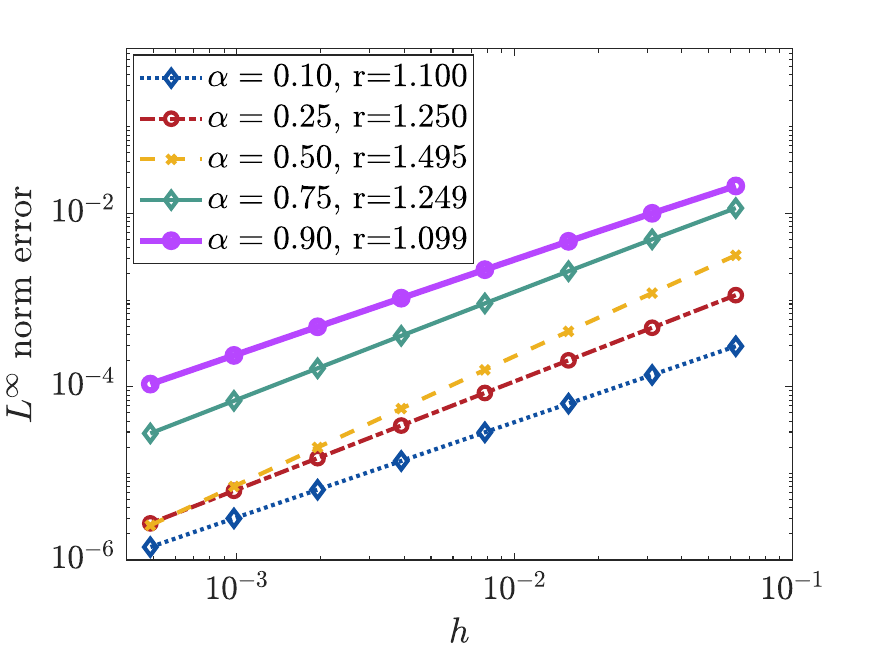}
\caption{$f(x,t)=t$, $r\approx \min(\mu+\alpha,2-\alpha)$ with $\mu=1$}
\label{fig:OrdersExactRegular_mu10}
\end{subfigure}
\begin{subfigure}{0.49\textwidth}
\includegraphics[scale=0.64]{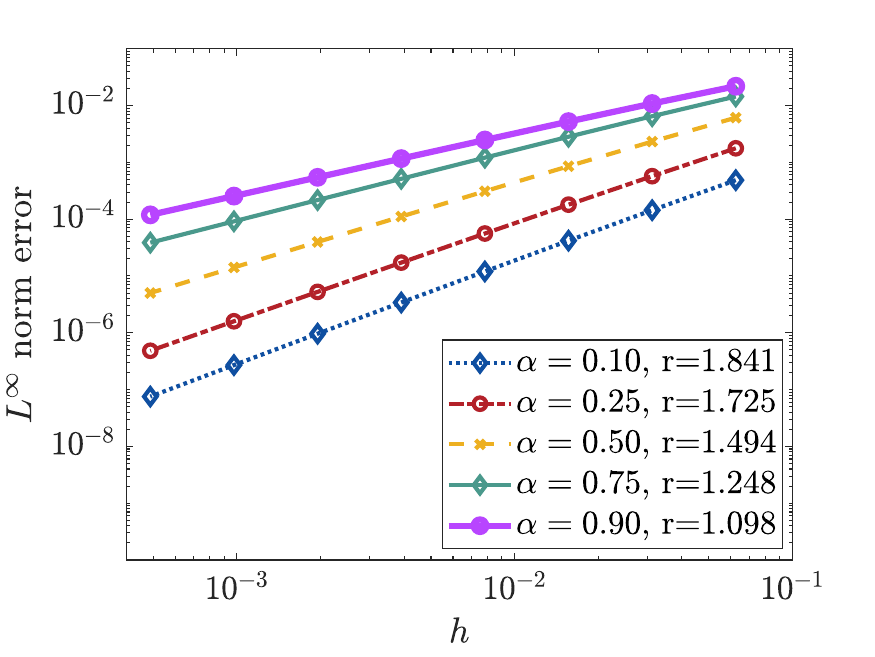}
\caption{$f(x,t)=t^2$, $r\approx 2-\alpha$}
\label{fig:OrdersExactRegular_mu12}
\end{subfigure}
\caption{The $L^\infty$ norm error for $h\in {\{2^{-11},2^{-10},\ldots,2^{-4}\}}$ and selected values of $\alpha\in\{0.1,0.25,0.5,0.75,0.9\}$ for the numerical solution to \eqref{eqn:ModelProb} with $u(x,0)=0$ and $f(x,t)=t^\mu$.}
\label{fig:OrdersExactRegular}
\end{figure}

\subsection{One-sided probability distribution function problem: a comparison with Monte Carlo approximation}
In this part, we investigate the efficiency of the numerical scheme that approximates the solution to the problem \eqref{eqn:DensityProb}, specifically a problem of PDF's derivation. Firstly, we check the performance of the proposed finite-volume upwind method in \eqref{eqn:ULWProblem} for the scaling limits of the one-sided wait-first L\'evy walk, which corresponds to the case governed by the "$+$" version of the fractional material derivative. First, in Figure \ref{fig:waitFirstLWdensity} we present the accuracy of the implemented numerical scheme using \eqref{eqn:NumMetModel}. As we can see, the numerical solution matches the exact solution. To gain more quantitative insight into the accuracy, we tested the convergence rate of the numerical scheme's error. Figure \ref{fig:OrdersWaitFirstLW} shows the convergence rates of the numerical scheme \eqref{eqn:NumMetModel} for the various values of the parameter $\alpha$, $\alpha\in\{0.10, 0.25, 0.50, 0.75, 0.90\}$ and the different spatio-temporal step sizes $h$, $h\in\{2^{-11},2^{-10},\ldots,2^{-4}\}$. For all the values of $\alpha$ examined, the estimated convergence rate order under the $L^2$ norm equals $0.5$ and therefore confirms the theoretical result in Theorem \ref{thm:convergence}. 

Next, the problem of conservation of total probability is addressed in Figure \ref{fig:probabilityConservation}. It appears that both the numerical schemes considered, the standard finite-volume upwind scheme \eqref{eqn:NumMetModel} (left panel (a)) and the modified finite-volume upwind scheme, which uses the next-step value of the source function \eqref{eqn:NumMetDensityStepAhead}, (right panel (b)), manifest the convergence of the total probability to $1$ in time. Except for the first values of the total probability in time, which are strongly affected by the singularity of the initial condition ($u(x,0)=\delta(x)$), we see that both proposed schemes differ: a standard one results in a decreasing monotonicity, while that modified one - in an increasing one. It is worth recalling that the result of the modified scheme is guaranteed by Theorem \ref{thm:totalProbabilityConservation}.

\begin{figure}[ht]
\centering
\includegraphics[scale=0.8]{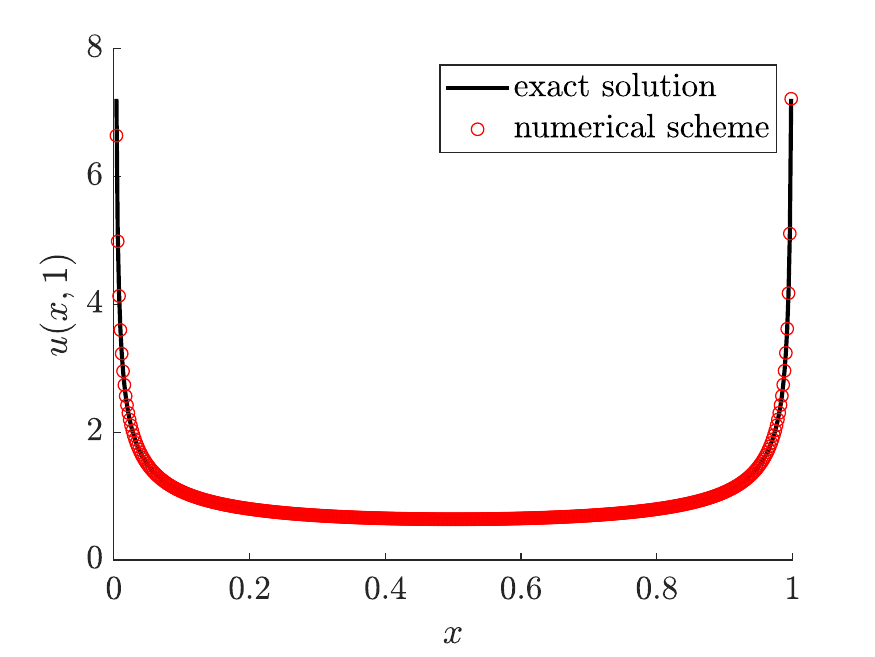}
\caption{A comparison of numerical solution obtained using  \eqref{eqn:NumMetModel} and exact solution for $\alpha=0.5$ for the PDF of the scaling limit of  the one-sided wait-first L\'evy walk, see eq. \eqref{eqn:ULWProblem} in Example \ref{ex:waitFirstLW}.}
\label{fig:waitFirstLWdensity}
\end{figure}

\begin{figure}[ht]
\centering
\includegraphics[scale=0.8]{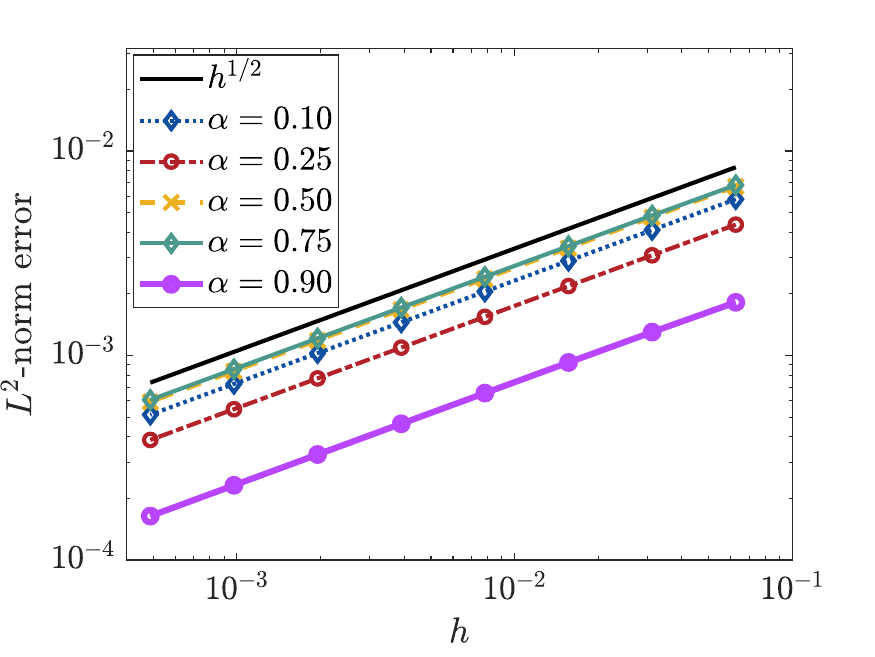}
\caption{ The $L^2$ norm error for $h\in {\{2^{-11},2^{-10},\ldots,2^{-4}\}}$ and selected values of $\alpha\in\{0.1,0.25,0.5,0.75,0.9\}$ for the numerical solution of the PDF problem of the scaling limit of the one-sided wait-first L\'evy walk, see Theorem \ref{thm:Density} and eq. \eqref{eqn:ULWProblem} in Example \ref{ex:waitFirstLW}.}
\label{fig:OrdersWaitFirstLW}
\end{figure}

\begin{figure}[ht]
\centering
\begin{subfigure}{0.49\textwidth}
\includegraphics[scale=0.64]{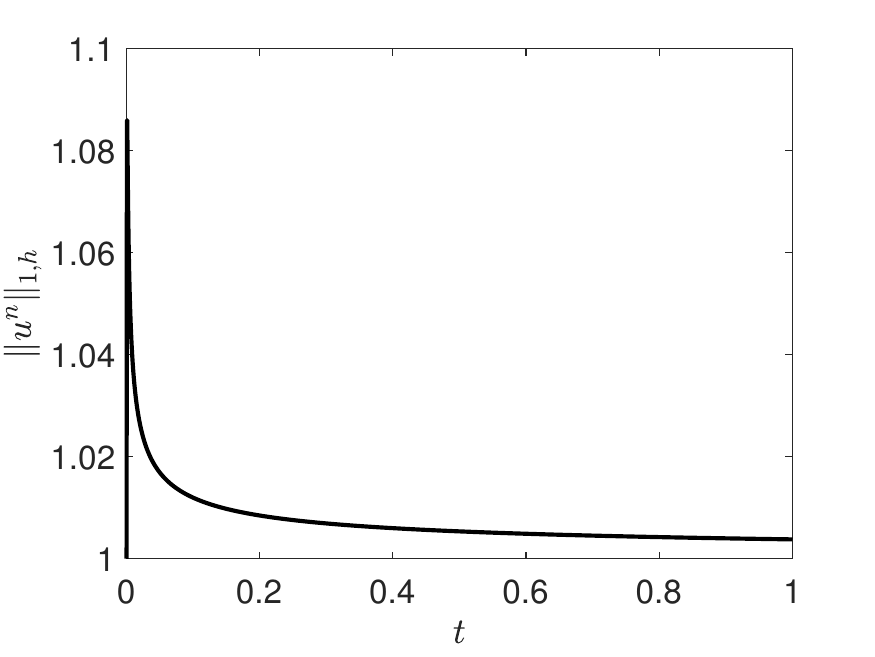}
\caption{standard finite volume upwind scheme \eqref{eqn:NumMetModel}}
\label{fig:probabilityConservationStandard}
\end{subfigure}
\begin{subfigure}{0.49\textwidth}
\includegraphics[scale=0.64]{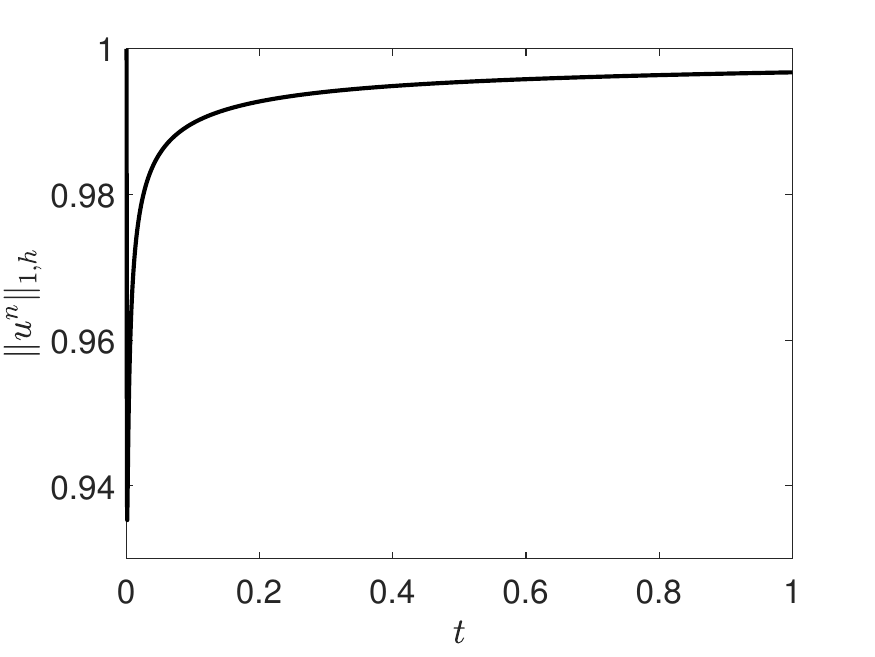}
\caption{modified finite volume upwind scheme \eqref{eqn:NumMetDensityStepAhead}}
\label{fig:probabilityConservationNextStep}
\end{subfigure}
\caption{Visualisation of the conservation law for the total probability for the consecutive time points of the numerical solution to a PDF problem of the scaling limit of the one-sided wait-first L\'evy walk (Example \ref{ex:waitFirstLW}). The parameters used in the numerical scheme are  $\alpha=0.5$, $h=2^{-11}$.}
\label{fig:probabilityConservation}
\end{figure}

Lastly, we evaluate the relative speed of our numerical method. We compare it with the Monte Carlo (MC) approximation of the respective PDF.  The main idea of the Monte Carlo approximation of the PDF of the scaling limit of the one-sided wait-first L\'evy walk given by \eqref{eqn:ULWProblem} is to simulate the trajectories of the considered stochastic process and then to estimate the PDF of the process at the fixed time point. Therefore, to simulate the trajectories of $\left(L^-_{\alpha}(S_\alpha^{-1}(t))\right)^+$ with a PDF that solves the problem in a weak sense \eqref{eqn:ULWProblem} we need to simulate the dependent trajectories of the inverse stable subordinator $S_\alpha^{-1}(t)$ and L\'evy-stable motion $L_{\alpha}(s)$, and then take their appropriate composition that leads to the trajectories of the considered scaling limits of wait-first L\'evy walk. We recall that processes ${L}_{\alpha}(s)$ and $S_\alpha(t)$ are strictly dependent, namely that the jumps occur at the same moments of time and have the same length. The algorithm can be summarized in the following points (see also \cite{Teuerle2015} for a more general problem).

\begin{itemize}
\item[1.] For a given space-time grid with a spatio-temporal step $h>0$, simulate trajectory $S_{\alpha,h}^{-1}(t)$ of the inverse $\alpha$-stable subordinator $S_\alpha^{-1}(t)$ using the following formula:
$$
S^{-1}_{\alpha,h}(t)=\left(\min\{k\in\mathbb{N}:T_\alpha(h k)>t\}-1\right)h,
$$
where the values of the subordinator $T_\alpha(t)$ are given by an exact recursive scheme
\begin{equation}
T_\alpha(h n)=T_\alpha(h (n-1))+h^{1/\alpha}\xi_n, \quad T_\alpha(0)=0.
\end{equation}
Here, the increments $\xi_n$ for $n=1,2,\ldots$ are given by an independent and identically distributed sequence of strictly positive $\alpha$-stable random variables (for details on simulations of $\alpha$-stable see \cite{Magdziarz2009}).

\item[2.] Simulate the trajectory of the parent process ${L}_{\alpha}(s)$
that is given by the recursive scheme
\begin{equation}
{L}_{\alpha,h}(h n)={L}_{\alpha,h}(h (n-1))+h^{1/\alpha}\xi_n,\quad {L}_{\alpha,h}(0)=0,
\end{equation}
where variables $\xi_n$ for $=1,2,\ldots$ are exactly the same as in point $1$ due to the strict dependence between ${L}_{\alpha}(s)$ and $S_\alpha(t)$.
\item[3.] Take the composition of ${L}_{\alpha,h}(s)$ and $S_{\alpha,h}(t)$ to obtain the trajectories of the process $\left(L^-_{\alpha}(S_\alpha^{-1}(t))\right)^+$.
\item[4.] Estimate the PDF of the process $\left(L^-_{\alpha}(S_\alpha^{-1}(t))\right)^+$ at given time point $t$ on a given space grid.
\end{itemize}
In our experiments, we have calculated the approximations of the PDF function using average values of the trajectories that ended up within the same parts of space grid as for the previous algorithm.  

In Figure \ref{fig:MCdensity} we compare the accuracy of the MC method applied to derive the PDF of the scaling limit of the one-sided wait-first L\'evy walk by plotting the estimated PDF values on the space grid for $t=1$ with the exact value obtained in \eqref{eqn:solWaitFirstLW}. As can be seen, the quality of the obtained MC approximation is rather unsatisfactory for the $10^5$ iterations and improves significantly for the $10^6$ iterations. The execution times of the MC method for $10^5$ and $10^6$ MC iterations are 23 and 283, respectively, times slower than the one using the numerical scheme proposed in this work. It is worth noticing that these results are obtained using an unparalleled implementation. The MC approximation scheme of the PDF has natural potential to be parallelized because of the independence of the trajectories of the stochastic process that can result in substantial improvement, which is not pursued in this work. Nevertheless, the optimization of both approximation approaches is worth exploring further in future work. 

\begin{figure}[!h]
\centering
\begin{subfigure}{0.49\textwidth}
\includegraphics[scale=0.64]{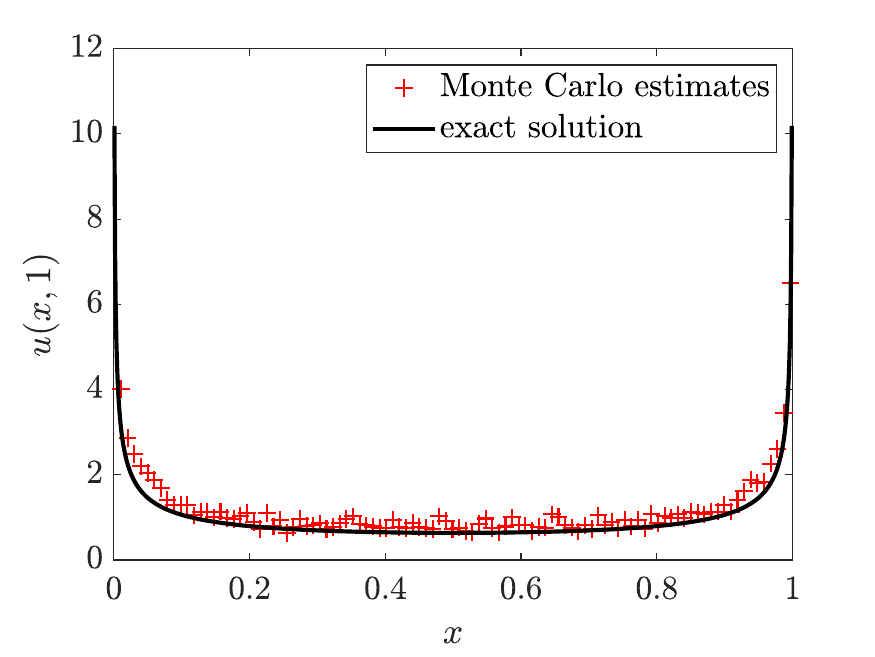}
\caption{based on $10^5$ MC trajectories}
\label{fig:MCdensity10^5}
\end{subfigure}
\begin{subfigure}{0.49\textwidth}
\includegraphics[scale=0.64]{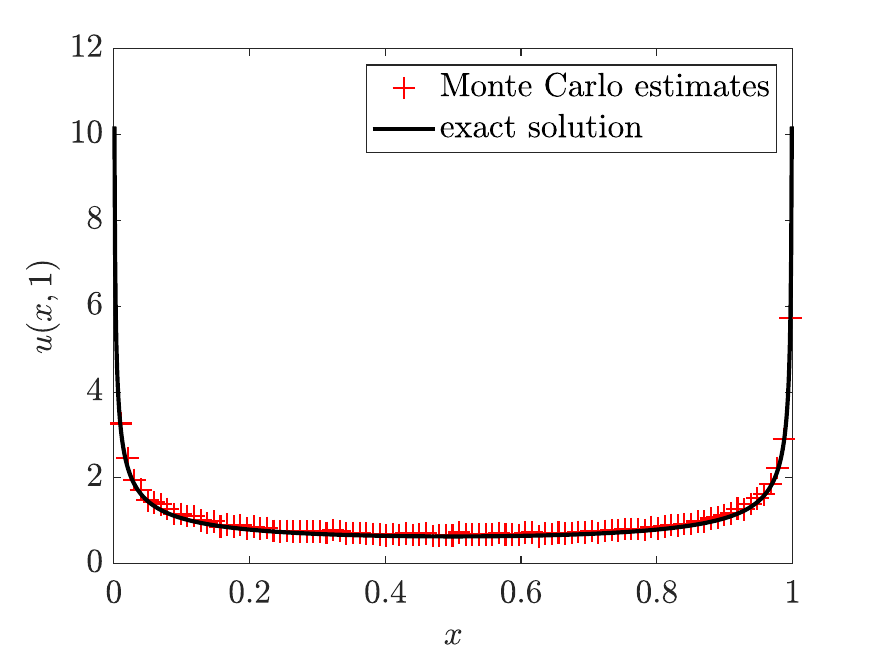}
\caption{based on $10^6$ MC trajectories}
\label{fig:MCdensity10^6}
\end{subfigure}
\caption{
A comparison of MC approximation and exact solution for $\alpha=0.5$ for the PDF of the scaling limit of the one-sided wait-first L\'evy walk, see Example \ref{ex:waitFirstLW}. The parameters used in the numerical scheme are $\alpha=0.5$, $h=2^{-10}$, while the number of MC iteration varies from $10^5$ in (a) to $10^6$ in (b).}
\label{fig:MCdensity}
\end{figure}

\section{Conclusions}

In this work, we investigate the so-called fractional material derivative, which is a generalization of the local operator of the material derivative into the fractional and non-local counterpart. First, we show in detail the origins of the fractional material derivative within the framework of L\'evy walks. We present that the considered derivative is a pseudo-differential operator that naturally occurs in the context of dynamics of the scaling limits of L\'evy walks. The main theoretical results are related to the proof of local and pointwise representation in time and space of the fractional material derivative by means of the Riemann-Liouville operator acting in the direction of $x\pm t=C$. Next, we prove the necessary and sufficent conditions for the solveability of several problems governed by the fractional material derivative, including those related to the probabilistic origins of the considered operator. Analytical solutions are obtained in three examples of one-sided probability problems of the L\'evy walks' scaling limits. The second part of this work is devoted to numerical schemes that approximate the solution of the general initial value problem that involves the fractional material derivative. We propose to use a finite-volume method that includes the L1 scheme for the temporal dimension and proved its stability together with its convergence for some model equation. In addition, we establish a modified numerical scheme that addresses the conservation of the total probability law. The convergence and accuracy of the numerical approximations have been verified for some smooth solution. Moreover, we confirm that the proposed modification of the numerical solution of the probability density function conserves the law of total probability asympotically.  

As we have presented, the stochastic approach, which is based on the stochastic CTRW dynamics, can be alternatively solved using deterministic arguments proving and recalling a deep relation between these two. Our results tentatively suggest that the proposed numerical scheme outperforms MC approximations of one-sided probability density functions of the L\'evy walks' scaling limits that are relatively slow due to simulation of subordinated stochastic process trajectories at least in its unparalleled implementation. In further work it will be relevant and interesting to consider precise estimates of the error's convergence rate with relations to a certain regularity of the solutions and devise a numerical scheme for some generalized L'evy walks \cite{LW_review}.

\section{Acknowledgement}
Ł.P. and M.A.T has been supported by the National Science Centre, Poland (NCN) under the grant Sonata Bis with a number NCN 2020/38/E/ST1/00153.

{\small 
\bibliographystyle{plain}
\bibliography{biblio}
}

\end{document}